\theoremstyle{plain}
\newtheorem{theorem}{Theorem}[section]
\newtheorem{proposition}{Proposition}[section]
\newtheorem{coro}{Corollary}[section]
\newtheorem{lemma}{Lemma}[section]
\theoremstyle{definition}
\newtheorem{defi}{Definition}[section]
\newtheorem{ex}{Example}[section]
\theoremstyle{remark}
\newtheorem{remark}{Remark}[section]
\def\noeop{\let\popQED\relax}
\newcommand{\reel}{\mathbb{R}}
\newcommand{\Z}{\mathbb{Z}}
\newcommand{\n}{\mathbb{N}}
\newcommand{\e}{\mathbb{E}}
\newcommand{\indic}{\mathds{1}}
\DeclareMathOperator{\Cov}{Cov}
\begin{document}
\baselineskip 6mm

\title{Products of random variables and the first digit phenomenon}

\author{Nicolas Chenavier\,\textsuperscript{1,2}, Bruno Mass\'e\,\textsuperscript{1,2,3}  and Dominique Schneider\,\textsuperscript{1,2}}

\footnotetext[1]{Univ. Littoral C\^ote d'Opale, EA 2597 - Laboratoire de mathématiques pures et appliqu\'ees Joseph Liouville, F-62228 Calais, France.}
\footnotetext[2]{e-mail: \{nicolas.chenavier, bruno.masse, dominique.schneider\}@lmpa.univ-littoral.fr.}
\footnotetext[3]{Corresponding author.}

\date{\today}
\maketitle

\begin{abstract}
We provide conditions on dependent and on non-stationary random variables $X_n$ ensuring that the mantissa of the sequence of products $\left(\prod_{1}^{n}X_k\right)$ is almost surely distributed following the Benford's law or converges in distribution to the Benford's law. 
\end{abstract}

{\bf Keywords:} Benford's law, density, mantissa, weak convergence

{\bf AMS classification:} 60B10,11B05,11K99

\section{Introduction}

Let $b>1$. The {\it Benford's law in base $b$} is the probability measure $\mu_b$ on the interval $[1;b[$ defined by 
$$
\mu_b([1;a[\,)= \log_{b}a\;\;\;\;\;(1\leq a <b)
$$
where $\log_{b}a$ denotes the logarithm in base $b$ of $a$.
The {\it mantissa} in base $b$ of a positive real number $x$ is the unique number ${\cal M}_b(x)$ in $[1;b[\,$ such that there exists an integer $k$ satisfying $x={\cal M}_b(x)b^k$. 

When a sequence of positive random variables $(X_n)$ is of a type usually considered by the probabilists and the statisticians, there is little to be said on $({\cal M}_b(X_n))$ (see Remark \ref{rem} in Section \ref{ud1} for instance) while by contrast there is much to report on $\left({\cal M}_b\left(\prod_{1}^{n}X_k\right)\right)$ as we will see. Our purpose is therefore to exhibit conditions on $X_n$ ensuring that the sequence $\left({\cal M}_b\left(\prod_{1}^{n}X_k\right)\right)$ is almost surely distributed following $\mu_b$ (see Definition \ref{def1}) or ensuring that the law of ${\cal M}_b\left(\prod_{1}^{n}X_k\right)$ converges weakly to $\mu_b$ as $n\rightarrow +\infty$. We hope that this will enlarge, to a certain extent, the field of applications of the Benford's law (see Section \ref{fdp} for examples of such applications).

To the best of our knowledge, apart from \cite{miller}, the known results on the asymptotic behaviour of $\left({\cal M}_b\left(\prod_{1}^{n}X_k\right)\right)$ only deal with the cases where the $X_n$ are independent and identically distributed and the situations where $X_n=X$ for $n\geq 1$ and $X$ is some random variable (see Section \ref{known} for details).

\subsection{The first digit phenomenon}\label{fdp}

Benford \cite{benford} noticed in 1938 that many real-life lists of numbers have a strange property: numbers whose mantissae are small are more numerous than those whose mantissae are large. This fact is called the {\it First Digit Phenomenon}. He also noticed that this phenomenon seems independent of the units. This led him to make a scale-invariance hypothesis (more or less satisfied in real life) from which he derived that $\mu_{10}$ can be seen as the (ideal) distribution of digits or mantissa of many real-life numbers. Of course, this ideal distribution is never achieved in practice. 

Several mathematicians have been involved in this subject and have provided sequences of positive numbers whose mantissae are (or approach to be) distributed following $\mu_b$ in the sense of the natural density \cite{anderson,cohen,diaconis,eliahou,masse4} (see Definition \ref{def1}), random variables whose mantissa law is or approaches $\mu_b$ \cite{berger,dumbgen,hill,janvresse,leemis}, sequences of random variables whose mantissae laws converge to $\mu_b$ or whose mantissae are almost surely distributed following $\mu_b$ \cite{masse2,miller,robbins,sharpe}. Among the many applications of the First Digit Phenomenon, we can quote: fraud detection \cite{nigrini}, computer design \cite{hamming,knuth} (data storage and roundoff errors), image processing \cite{xu} and data analysis in natural sciences \cite{nigrini0,sambridge}. 

\subsection{Content}

Section \ref{preli} is devoted to notation, definitions and tools from Uniform Distribution Theory. Our main results are presented in Section \ref{gene}. Theorem \ref{th1} gives a simple necessary and sufficient condition ensuring that the sequence $\left({\cal M}_b\left(\prod_{1}^{n}X_k\right)\right)$ is almost surely distributed following $\mu_b$ under the assumption that the sequence $(X_n)$ is stationary. Theorem \ref{th2}  gives a sufficient condition without constraints on the dependence and on the stationarity of the $X_n$. These properties are used in Section \ref{appli} to investigate the cases where the random variables $X_n$ are stationary and log-normal, are exchangeable, are stationary and 1-dependent and the case where they  are independent and non-stationary. We provide in Section \ref{app} a survey of the main known properties of the Benford's law (scale-invariance, power-invariance and invariance under mixtures). We think that this might help, together with Section \ref{known}, to put our results in perspective.

\subsection{Known results on random product sequences}\label{known}

When the $X_n$ are independent and identically distributed, 
\begin{itemize}

\item the sequence $\left({\cal M}_b\left(\prod_{1}^{n}X_k\right)\right)$ is almost surely distributed following $\mu_b$ if and only if for every positive integer $h$
\[
\e(\exp(2i\pi h\log_b X_1))\neq 1,
\]
that is to say if and only if the common law of the $X_n$ is not supported by any set $\{b^{\frac{z}{h}} : z \;\mbox{integer}\}$ ($h$ positive integer);

\item the law of ${\cal M}_b\left(\prod_{1}^{n}X_k\right)$ converges weakly to $\mu_b$ if and only if for every positive integer $h$
\[
|\,\e(\exp(2i\pi h\log_b X_1))|\neq 1,
\]
that is to say if and only if the common law of the $X_n$ is not supported by any set $\{b^{a+\frac{z}{h}} : z \;\mbox{integer}\}$ ($a\in[0,1[$, $h$ positive integer).
\end{itemize}
A proof of the first statement is available in \cite{robbins}. The second statement is a direct consequence of Lemma \ref{lem1} below. 

Moreover 

\begin{itemize}

\item the sequence $\left({\cal M}_b(X^n)\right)$ is almost surely distributed following $\mu_b$ if and only $P(X\in\{b^r : r \;\mbox{rational}\})=0$;

\item the law of ${\cal M}_b(X^n)$ converges weakly to $\mu_b$ if and only if for every positive integer $h$
\[
\lim_{n\rightarrow \infty}\e(\exp(2i\pi nh\log_b X))=0.
\]
\end{itemize}
The two above conditions are fulfilled when $X$ (and hence $\log_b X$) admits a density.
The first statement derives from the fact that the sequence $({\cal M}_b(c^n))$ is distributed following $\mu_b$ if and only if $\log_b c$ is irrational (see \cite[p. 8]{kuipers} and Section \ref{ud1} below). The second statement is also a direct consequence of Lemma \ref{lem1} below. 

It is worth noting that, in the situations discussed above, the law of ${\cal M}_b\left(\prod_{1}^{n}X_k\right)$ converges weakly to $\mu_b$ in most cases for every value of $b$, while there does not exist any random variable $Z$ such that the law of ${\cal M}_b(Z)$ is $\mu_b$ for every value of $b$ (see Section \ref{base-inv}). 

The following example shows the kind of difficulties that can arise when the $X_n$ are neither independent nor stationary.

\begin{ex}
Consider an i.i.d. sequence of random variables $(Z_m)$ with common law $\mu_{b}$. Set $X_{2m-1}=Z_m$ and $X_{2m}=b/Z_m$ ($m\geq 1$). Then the random variables $X_n$ are identically distributed following $\mu_{b}$ because 
\[
P\left(\frac{b}{Z_m}<t\right)=P\left(\frac{b}{t}<Z_m\right)=1-\log_{b}\left(\frac{b}{t}\right)=\log_{b} t\;\;\;(1\leq t <b) .
\]

The sequence $({\cal M}_b(Z_m))$ is a.s. distributed following $\mu_{b}$ by Glivenko-Cantelli Theorem and ${\cal M}_b(b^{m-1}Z_m)={\cal M}_b(Z_m)$. Then the sequence $\left({\cal M}_b\left(\prod_{1}^{n}X_k\right)\right)$ is a.s. distributed following $(1/2)\delta_1+(1/2)\mu_{b}$ because $\prod_{1}^{2m}X_k=b^m$ and $\prod_{1}^{2m-1}X_k=b^{m-1}Z_m$. And
the law of ${\cal M}_b(\prod_{1}^{n}X_k)$ does not converge weakly as $n\rightarrow +\infty$ because 
the law of ${\cal M}_b(\prod_{1}^{2m}X_k)$ is $\delta_1$ while the law of ${\cal M}_b(\prod_{1}^{2m-1}X_k)$ is $\mu_b$.
\end{ex}

\section{Preliminaries}\label{preli}

We present here some useful notation and definitions and the relationship between the investigation of Benford's law and Uniform Distribution Theory.

\subsection{Other notation and definitions}

We shall consistently use the following notation through this paper: whenever $(X_n)$ is a given sequence of positive random variables, we set
\[
Y_n=\prod_{k=1}^{k=n}X_k\;\;\;\mbox{ and }\;\;\;Y_{m,n}=\prod_{k=m}^{k=n}X_k\;\;\;\;(1\leq m\leq n)\,.
\]

Her is some other notation used in this article: the natural logarithm is denoted by $\log$; for any real $x$ and any integer $h$, we set $e_h(x)=\exp(2i\pi hx)$ where $i^2=-1$; the symbol $\{x\}$ stands for the fractional part of a real $x$; we write $\Z^+$ for the set of positive integers; the standard abbreviations {\it a.s.}, {\it r.v.} and {\it i.i.d.} stand respectively for {\it almost surely} (or {\it almost sure}), {\it random variable} and {\it independent and identically distributed}; all the r.v.'s in consideration are supposed to be defined on the same probability space $(\Omega, {\cal T},P)$ and the law of a r.v. $Z$ is denoted $P_Z$. 

\begin{defi}\label{def1}
A sequence $(v_n)$ of real numbers in $[1;b[$ is called {\it Benford in base $b$} if it is distributed as $\mu_b$, that is to say if 
$$
\lim_{N\rightarrow +\infty}\frac{1}{N}\sum_{n=1}^{N}\mathds{1}_{[1;\,a[}(v_n)= \log_{b}a\;\;\;\;\;(1\leq a <b)\,.
$$ 
A sequence $(u_n)$ of positive numbers is also called Benford in base $b$ if the sequence of mantissae $\left({\cal M}_b(u_n)\right)$ is Benford in base $b$. 
\end{defi}

For instance, the sequences $(n!)$, $(n^n)$ and $(c^n)$ (with $\log_b c$ irrational) are Benford in base $b$ \cite{masse4}. The sequences $(n)$ and $(\log n)$ and the sequence of prime numbers are not \cite{fuchs}. See \cite{anderson,masse4} for more examples of Benford sequences.

\begin{defi}\label{def2}

We say that a positive random variable $Z$ is Benford in base $b$ when $P_{{\cal M}_b(Z)}=\mu_b$, that a sequence of positive random variables $(Z_n)$ is a.s. Benford in base $b$ when
$$
P(\{\omega : (Z_n(\omega)) \textrm{ is a Benford sequence in base $b$}\})=1
$$
and that $Z_n$ tends to be Benford in base $b$ when the sequence $(P_{{\cal M}_b(Z_n)})$ converges weakly to $\mu_b$.
\end{defi}

These notions are connected (see Section \ref{known}, Remark \ref{rem} and Theorem \ref{th1} and its corollaries) but are however significantly different one from the other. Indeed, suppose that $Z_n=n!$ a.s. for $n\geq 1$. Then the sequence $\left(P_{{\cal M}_{b}(Z_n)}\right)$ does not converge weakly while the sequence $(Z_n)$ is a.s. Benford in base $b$. Conversely, suppose that $Z_n=T$ ($n\geq 1$) where $T$ is Benford in base $b$. Then $Z_n$ tends to be Benford in base $b$, but the sequence $(Z_n)$ is a.s. not Benford.

\subsection{Benford law and uniform distribution modulo 1}\label{ud1}
It is well known and easy to verify that a sequence $(u_n)$ of positive numbers is Benford in base $b$ if and only if the sequence of fractional parts $(\{\log_b u_n\})$ is uniformly distributed in $[0,1[$, that is to say 
$$
\lim_{N\rightarrow +\infty}\frac{1}{N}\sum_{n=1}^{N}\mathds{1}_{[0,\,c[}(\{\log_b u_n\})= c\;\;\;\;\;(0\leq c <1)\,,
$$
that a positive random variable $Z$ is Benford in base $b$ if and only if $P_{\{\log_b Z\}}$ is the uniform probability on $[0,1[$ and that a sequence $(Z_n)$ of positive random variables tends to be Benford in base $b$ if and only if the sequence $(P_{\{\log_bZ_n\}})$ converges weakly to the uniform distribution in $[0,1[$. Combining this with the celebrated Weyl's Criterion \cite[p. 7]{kuipers} yields the following lemma.

\begin{lemma} \label{weyl}
Consider a sequence $(u_n)$ of positive numbers and a sequence $(Z_n)$ of positive r.v.'s. Then $(u_n)$ 
is Benford in base $b$ if and only if
\[
\forall\, h\in\Z^+\,,\;\lim_{N\rightarrow+\infty}\frac{1}{N}\sum_{n=1}^{N}e_h(\log_b u_n)=0
\]
and $(Z_n)$ is a.s. Benford in base $b$ if and only if
\[
\forall\, h\in\Z^+\,,\;\lim_{N\rightarrow+\infty}\frac{1}{N}\sum_{n=1}^{N}e_h(\log_b Z_n)=0\;\;\;\;\mbox{a.s.}.
\]
\end{lemma}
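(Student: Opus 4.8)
The plan is to reduce both assertions to Weyl's Criterion by means of the two equivalences recalled just above: a sequence $(u_n)$ of positive reals is Benford in base $b$ if and only if $(\{\log_b u_n\})$ is uniformly distributed in $[0,1[$, and a sequence $(Z_n)$ of positive random variables is a.s.\ Benford in base $b$ if and only if, for $P$-almost every $\omega$, the sequence $(\{\log_b Z_n(\omega)\})$ is uniformly distributed in $[0,1[$.

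First I would prove the deterministic part. Fixing $h \in \Z^+$, I would note that $h\log_b u_n$ and $h\{\log_b u_n\}$ differ by an integer, so that $e_h(\log_b u_n) = e_h(\{\log_b u_n\})$ and hence $\frac1N\sum_{n=1}^N e_h(\log_b u_n)$ is exactly the $h$-th exponential sum attached to the sequence $(\{\log_b u_n\})$. By Weyl's Criterion these sums tend to $0$ for every $h\in\Z^+$ precisely when $(\{\log_b u_n\})$ is uniformly distributed mod $1$, i.e.\ precisely when $(u_n)$ is Benford in base $b$; this is the first equivalence.

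For the random part I would argue pathwise, the only extra ingredient being an interchange of quantifiers. For fixed $h$ and $N$ the map $\omega\mapsto \frac1N\sum_{n=1}^N e_h(\log_b Z_n(\omega))$ is measurable, hence so is the event $A_h=\{\omega : \frac1N\sum_{n=1}^N e_h(\log_b Z_n(\omega))\to 0\}$. Applying the deterministic part to the sequence $(Z_n(\omega))$ for each fixed $\omega$ shows that $(Z_n(\omega))$ is Benford in base $b$ if and only if $\omega\in\bigcap_{h\in\Z^+}A_h$; therefore $(Z_n)$ is a.s.\ Benford in base $b$ if and only if $P\!\left(\bigcap_{h\in\Z^+}A_h\right)=1$. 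If the condition of the lemma holds then $P(A_h)=1$ for every $h$, and since $\Z^+$ is countable the intersection still has probability $1$; conversely $P\!\left(\bigcap_{h}A_h\right)=1$ forces $P(A_h)=1$ for each $h$, which is the stated almost-sure convergence.

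I expect the main (and essentially only) subtlety to be this passage from ``for each $h\in\Z^+$, almost surely the $h$-th average vanishes in the limit'' to ``almost surely, for every $h\in\Z^+$ the $h$-th average vanishes in the limit'': it is licit because $\Z^+$ is countable, so that a countable intersection of full-measure events is again of full measure. The remaining points — the integer-shift identity for $e_h$, and the measurability of the partial averages and of their convergence set — are routine.
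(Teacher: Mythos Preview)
Your proposal is correct and follows exactly the route the paper indicates: the paper does not give a separate proof of this lemma but simply states that it follows by combining the equivalence ``$(u_n)$ Benford in base $b$ $\Leftrightarrow$ $(\{\log_b u_n\})$ uniformly distributed in $[0,1[$'' with Weyl's Criterion, which is precisely what you do. Your treatment of the a.s.\ part --- arguing pathwise and using countability of $\Z^+$ to swap the quantifiers --- is the natural (and only) way to make the second assertion rigorous, and is implicit in the paper's one-line justification.
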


L\'evy's Theorem states that the weak convergence of a sequence $(\mu_n)$ of probability measures to a probability measure $\mu$ is equivalent to the pointwise convergence of the characteristic function of $\mu_n$ to that of $\mu$. On the torus $\reel/\Z$, the convergence of the Fourier coefficients suffices \cite[p. 363]{bill}. Since for every $x>0$ and every $h\in\Z^+$, $e_h(\{\log_b x\})=e_h(\log_b x)$, we get the following characterizations. 

\begin{lemma}\label{lem1} Consider a positive r.v. $Z$ and a sequence $(Z_n)$ of positive r.v.'s. Then $Z$ is Benford in base $b$ if and only if
$$
\forall\, h\in\Z^+\,,\;\e\left(e_h(\log_b Z)\right)=0
$$ 
and $Z_n$ tends to be Benford in base $b$ if and only if 
$$
\forall\, h\in\Z^+\,,\;\lim_{n\rightarrow+\infty}\e\left[e_h(\log_b Z_n)\right]=0.
$$
\end{lemma}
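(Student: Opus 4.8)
The plan is to transfer everything to the torus $\reel/\Z$ via the fractional part and then invoke the Fourier-analytic characterizations of, respectively, the uniform law and of weak convergence to it. First I would record the two reductions stated just above the lemma: $Z$ is Benford in base $b$ if and only if $P_{\{\log_b Z\}}$ is the uniform probability $\lambda$ on $[0,1[$, and $Z_n$ tends to be Benford in base $b$ if and only if $P_{\{\log_b Z_n\}}$ converges weakly to $\lambda$. Combined with the identity $e_h(\{\log_b x\})=e_h(\log_b x)$, valid for every $x>0$ and every integer $h$, this lets us rewrite each expectation $\e(e_h(\log_b Z))$ as the $h$-th Fourier coefficient $\widehat{P_{\{\log_b Z\}}}(h):=\int_{[0,1[}e_h\,dP_{\{\log_b Z\}}$, and likewise for $Z_n$.

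For the first equivalence, I would use that a probability measure $\nu$ on $[0,1[$, viewed on the torus, equals $\lambda$ if and only if all its Fourier coefficients $\widehat\nu(h)$ with $h\neq 0$ vanish; since $\widehat\nu(-h)=\overline{\widehat\nu(h)}$ by complex conjugation, it is enough to ask this for $h\in\Z^+$ (the coefficient $\widehat\nu(0)=1$ always). Applying this to $\nu=P_{\{\log_b Z\}}$ and using the rewriting above gives exactly $\e(e_h(\log_b Z))=0$ for all $h\in\Z^+$, which is the asserted characterization.

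For the second equivalence, I would invoke L\'evy's theorem in its torus form (as cited, \cite[p. 363]{bill}): a sequence of probability measures on $\reel/\Z$ converges weakly if and only if each Fourier coefficient converges, and the limiting coefficients identify the limit measure. Here the candidate limit is $\lambda$, whose coefficients are $1$ at $h=0$ and $0$ otherwise; the coefficient of $P_{\{\log_b Z_n\}}$ at $h=0$ is identically $1$, so the content of the convergence is precisely $\widehat{P_{\{\log_b Z_n\}}}(h)\to 0$ for every nonzero $h$, hence, by conjugation, for every $h\in\Z^+$. Rewriting the coefficients as expectations yields $\lim_n \e(e_h(\log_b Z_n))=0$ for all $h\in\Z^+$.

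The one point deserving care — the \emph{main obstacle}, such as it is — is the passage between weak convergence of measures on the half-open interval $[0,1[$ and weak convergence on the compact torus $\reel/\Z$: these agree here because the limit law $\lambda$ assigns no mass to the identified endpoint, so no mass escapes or concentrates at the seam, and tightness is automatic on the torus, which is exactly why convergence of the Fourier coefficients alone suffices. Everything else is the routine bookkeeping of the identity $e_h(\{t\})=e_h(t)$ and of the conjugation symmetry $h\leftrightarrow -h$.
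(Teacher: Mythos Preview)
Your proposal is correct and follows essentially the same route as the paper: reduce Benfordness to uniformity of $\{\log_b Z\}$ on the torus, use the identity $e_h(\{\log_b x\})=e_h(\log_b x)$, and then invoke the Fourier characterization (uniqueness of Fourier coefficients for the static statement, L\'evy's theorem on $\reel/\Z$ for the convergence statement). The paper states these ingredients in the paragraph preceding the lemma and does not give a separate formal proof; your write-up simply spells out the details (conjugation symmetry to restrict to $h\in\Z^+$, and the $[0,1[$-versus-torus point) that the paper leaves implicit.
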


We are now able to treat the remark evoked at the beginning of Section \ref{known}.
\begin{remark}\label{rem}
Let $Z_1, Z_2,\dots$ be identically distributed positive random variables (independent or not, stationary or not). In particular, $Z_n$ tends to be Benford in base $b$ if and only if each $Z_n$ is Benford in base $b$. More interesting is the fact that, if $(Z_n)$ is a.s. Benford in base $b$, then each $Z_n$ must be Benford in base $b$. Indeed, for every $h\in\Z^+$ and $N\geq 1$, $\e(e_h(\log_b Z_1))=\e\left((1/N)\sum_1^Ne_h(\log_b Z_n)\right)$. So, by Dominated Convergence Theorem, $\e(e_h(\log_b Z_1))=0$ when $(1/N)\sum_1^Ne_h(\log_b Z_n)$ converges a.s. to $0$. This situation is unlikely to occur (see Section \ref{app}).
\end{remark}

\section{General conditions}\label{gene}

We present in this section the two main results of our paper: Theorem \ref{th1} and Theorem \ref{th2}. Theorem \ref{th1} gives a necessary and sufficient condition ensuring that $(Y_n)$ is a.s. Benford under the assumption that the sequence $(X_n)$ is stationary. Theorem \ref{th2} gives a sufficient condition without constraints on the $X_n$.

\subsection{The first main result}
The proof of Theorem \ref{th1} will use two lemmas. The first one is a simple application of Riesz's summation methods. It is also connected with the N\"ordlund summation methods.

\begin{lemma}\label{lem2}
Let $(a_l)$ be a sequence of complex numbers. Then
\[
\left(\lim_{L\rightarrow +\infty}\frac{1}{L}\sum_{l=1}^{L}a_l=0\right)\Rightarrow\left(\lim_{L\rightarrow +\infty}\frac{1}{L}\sum_{l=1}^{L}\frac{L-l}{L}\ a_l=0\right).
\]
\end{lemma}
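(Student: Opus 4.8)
The plan is to show that the hypothesis forces the Cesàro means of the ``linearly weighted'' sums to vanish. The key observation is that
\[
\frac{1}{L}\sum_{l=1}^{L}\frac{L-l}{L}\,a_l=\frac{1}{L}\sum_{l=1}^{L}a_l-\frac{1}{L^2}\sum_{l=1}^{L}l\,a_l,
\]
so it suffices to prove that the second term tends to $0$, since the first does by hypothesis. For the second term I would use summation by parts (Abel summation): writing $S_L=\sum_{l=1}^{L}a_l$, one has $\sum_{l=1}^{L}l\,a_l=L\,S_L-\sum_{l=1}^{L-1}S_l$, hence
\[
\frac{1}{L^2}\sum_{l=1}^{L}l\,a_l=\frac{S_L}{L}-\frac{1}{L^2}\sum_{l=1}^{L-1}S_l .
\]

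Now I would invoke the hypothesis in the form $S_L/L\to 0$. The first term $S_L/L$ on the right then tends to $0$ directly. For the remaining term $\frac{1}{L^2}\sum_{l=1}^{L-1}S_l=\frac{1}{L}\cdot\frac{1}{L}\sum_{l=1}^{L-1}S_l$, I note that $S_l/l\to 0$ implies $S_l=o(l)$, so $\frac{1}{L}\sum_{l=1}^{L-1}S_l=\frac{1}{L}\sum_{l=1}^{L-1}o(l)=o(L)$ by a standard Cesàro-type estimate (given $\varepsilon>0$, split the sum at an index beyond which $|S_l|\le\varepsilon l$, bound the head by a constant and the tail by $\varepsilon L$). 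Dividing by another factor of $L$ gives $\frac{1}{L^2}\sum_{l=1}^{L-1}S_l\to 0$. Combining the three pieces yields $\frac{1}{L}\sum_{l=1}^{L}\frac{L-l}{L}a_l\to 0$.

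An alternative, perhaps cleaner, route avoids summation by parts entirely: one can write $\frac{L-l}{L}=\frac{1}{L}\sum_{m=l}^{L-1}1=\frac{1}{L}\sum_{m=1}^{L-1}\indic_{\{l\le m\}}$, so that
\[
\frac{1}{L}\sum_{l=1}^{L}\frac{L-l}{L}\,a_l=\frac{1}{L}\sum_{m=1}^{L-1}\left(\frac{1}{L}\sum_{l=1}^{m}a_l\right)=\frac{1}{L}\sum_{m=1}^{L-1}\frac{m}{L}\cdot\frac{1}{m}\sum_{l=1}^{m}a_l .
\]
Since $c_m:=\frac{1}{m}\sum_{l=1}^{m}a_l\to 0$ by hypothesis and $\frac{m}{L}\le 1$, the inner quantities $\frac{m}{L}c_m$ are bounded by $|c_m|$, and averaging a null sequence (here a sequence dominated by the null sequence $|c_m|$) over $m=1,\dots,L-1$ again produces something tending to $0$ by the elementary fact that Cesàro means preserve convergence to $0$.

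I expect the only real ``obstacle'' to be bookkeeping rather than conceptual: one must be careful that the averaging index runs up to $L-1$ rather than $L$ (a harmless $O(1/L)$ discrepancy) and that the elementary lemma ``$c_m\to 0\Rightarrow\frac{1}{L}\sum_{m\le L}c_m\to 0$'' is applied to a sequence only dominated by a null sequence, which is immediate since $\bigl|\frac{1}{L}\sum_{m=1}^{L-1}\frac{m}{L}c_m\bigr|\le\frac{1}{L}\sum_{m=1}^{L-1}|c_m|$. No hypothesis on the growth or boundedness of $(a_l)$ itself is needed — everything goes through the single assumption on its Cesàro means — so the proof is short once the algebraic identity for $(L-l)/L$ is in place.
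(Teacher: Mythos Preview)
Your proof is correct and follows essentially the same route as the paper. Both start from the identity
\[
\frac{1}{L}\sum_{l=1}^{L}a_l - \frac{1}{L}\sum_{l=1}^{L}\frac{L-l}{L}\,a_l = \frac{1}{L^2}\sum_{l=1}^{L}l\,a_l,
\]
and then reduce the problem to showing that $\frac{1}{L^2}\sum_{l=1}^{L}l\,a_l\to 0$. The only difference is that the paper outsources this last step to a reference (the equivalence of the $(C,1)$ and Riesz $(R,n,1)$ means, \cite[p.~63]{kuipers}), whereas you prove it by hand via Abel summation; your alternative double-sum argument is likewise a direct proof of the same classical fact. So your argument is self-contained where the paper's is not, but the underlying idea is identical.
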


\begin{proof}
For every $L\geq 1$
\[
\frac{1}{L}\sum_{l=1}^{L}a_l - \frac{1}{L}\sum_{l=1}^{L}\frac{L-l}{L}\ a_l = \frac{1}{L^2}\sum_{l=1}^{L}la_l.
\]
But the sequences $\left(\frac{1}{L}\sum_{l=1}^{L}a_l\right)$ and $\left(\frac{2}{L^2}\sum_{l=1}^{L}la_l\right)$ are simultaneously convergent and have the same limit when they converge \cite[p. 63]{kuipers}. This concludes the proof.
\end{proof}

The first statement in Lemma \ref{ivdc} is known as the van der Corput Fundamental Inequality \cite[p. 25]{kuipers}. The second statement is a direct consequence of the first one.

\begin{lemma}\label{ivdc}
Let $z_n$ be a complex number for $1\leq n\leq N$. Then, for $1\leq L\leq N$,
\[ \left|\sum_{n=1}^Nz_n\right|^2\leq \frac{N+L-1}{L}\sum_{n=1}^N|z_n|^2+2\operatorname{Re}\left(\frac{N+L-1}{L^2}\sum_{l=1}^{L-1}(L-l)\sum_{j=1}^{N-l}(z_{j+l}\overline{z}_{j})\right). 
\]
In particular, if $x_1,\dots,x_N$ are real numbers,
\[
\left|\frac{1}{N}\sum_{n=1}^Ne^{i(x_1+\dots +x_n)}\right|^2
\leq\frac{2}{L}+2\operatorname{Re}\left(\frac{N+L-1}{L^2N^2}\sum_{l=1}^{L-1}(L-l)\sum_{j=1}^{N-l}\left(e^{i\left(x_{j+1}+\dots+x_{j+l}\right)}\right)\right)
\]
for $1\leq L\leq N$.
\end{lemma}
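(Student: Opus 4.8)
The plan is to prove the first inequality (van der Corput's Fundamental Inequality) and then derive the second statement as the announced special case. For the first part, I would set $z_n=0$ for $n\le 0$ and $n>N$ so that all sums can be extended over $\mathbb Z$, and start from the identity
\[
L\sum_{n=1}^N z_n=\sum_{l=0}^{L-1}\sum_{n\in\mathbb Z} z_{n+l}
=\sum_{n\in\mathbb Z}\sum_{l=0}^{L-1} z_{n+l}.
\]
The right-hand side is a sum of $N+L-1$ terms (those $n$ for which the inner sum can be nonzero), so by Cauchy--Schwarz,
\[
L^2\Bigl|\sum_{n=1}^N z_n\Bigr|^2\le (N+L-1)\sum_{n\in\mathbb Z}\Bigl|\sum_{l=0}^{L-1} z_{n+l}\Bigr|^2.
\]
Then I would expand the inner square as $\sum_{l,l'} z_{n+l}\overline z_{n+l'}$, sum over $n$, and reorganize the double sum over $l,l'$ according to the difference $k=l-l'$: the diagonal $k=0$ contributes $L\sum_n|z_n|^2$, and for each $k$ with $1\le k\le L-1$ there are $L-k$ pairs, with the terms $k$ and $-k$ being complex conjugates of one another, which produces the factor $2\operatorname{Re}$ and the weight $(L-l)$ after renaming $k$ as $l$. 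Dividing by $L^2$ gives exactly the stated inequality. This is the routine but slightly bookkeeping-heavy part; the one point to watch is the index ranges in $\sum_{j=1}^{N-l}(z_{j+l}\overline z_j)$, which come out correctly once the padding-by-zero convention is used consistently.

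For the second statement, I would apply the first inequality to $z_n=e^{i(x_1+\cdots+x_n)}$, so that $|z_n|^2=1$ and hence $\sum_{n=1}^N|z_n|^2=N$, giving the first term $\frac{N+L-1}{L}\cdot N$. For the cross terms, $z_{j+l}\overline z_j=e^{i(x_{j+1}+\cdots+x_{j+l})}$, which no longer depends on the first $j$ exponents; this is precisely the telescoping that makes the bound useful. Dividing the whole inequality by $N^2$ turns the first term into $\frac{N+L-1}{LN}\le \frac{1}{L}+\frac{1}{N}\le \frac{2}{L}$ (using $1\le L\le N$), and leaves the double sum in the stated form.

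I do not expect a genuine obstacle here: both statements are classical and the only real work is the careful index manipulation in the Cauchy--Schwarz step of the first inequality. If anything, the mild subtlety is making sure the ``$N+L-1$ nonzero terms'' count is correctly justified before invoking Cauchy--Schwarz, and that the final arithmetic simplification $\frac{N+L-1}{LN}\le\frac 2L$ is valid in the regime $1\le L\le N$; both are immediate.
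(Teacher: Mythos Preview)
Your proposal is correct and follows the standard Cauchy--Schwarz argument for van der Corput's Fundamental Inequality, which is exactly what the paper's cited reference \cite[p.~25]{kuipers} contains; the paper itself does not give a proof but merely quotes the first inequality from that source and notes that the second is a direct consequence. Your derivation of the second statement, including the simplification $\frac{N+L-1}{LN}\le\frac{2}{L}$, is the intended ``direct consequence'', so there is nothing to add.
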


We are now ready to prove our first main result.

\begin{theorem}\label{th1}
Suppose that $(X_n)$ is stationary. Then the sequence $(Y_n)$ is a.s. Benford in base $b$ if and only if
\[
\forall\, h\in\Z^+\,,\;\lim_{L\rightarrow +\infty}\frac{1}{L}\sum_{l=1}^{L}\e(e_h(\log_b Y_l))=0\,.
\]
\end{theorem}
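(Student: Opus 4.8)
The plan is to use Lemma~\ref{weyl} to translate the statement into a claim about Weyl sums and then exploit stationarity via the van der Corput inequality. By Lemma~\ref{weyl}, the sequence $(Y_n)$ is a.s.\ Benford in base $b$ if and only if for every $h\in\Z^+$ we have $\frac1N\sum_{n=1}^N e_h(\log_b Y_n)\to 0$ a.s. Writing $\log_b Y_n = \sum_{k=1}^n \log_b X_k$, this is exactly a Weyl sum of the form appearing in the second part of Lemma~\ref{ivdc} with $x_k = 2\pi h\log_b X_k$. So the whole problem reduces to showing that the a.s.\ vanishing of these Weyl sums is equivalent to the stated Cesàro condition on $\e(e_h(\log_b Y_l))$.

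For the easy direction (necessity), I would fix $h$, assume $(Y_n)$ is a.s.\ Benford, so $\frac1N\sum_{n=1}^N e_h(\log_b Y_n)\to 0$ a.s.; since these averages are bounded by $1$, the dominated convergence theorem gives $\frac1N\sum_{n=1}^N \e(e_h(\log_b Y_n))\to 0$, which is the displayed condition (with $N$ in place of $L$). This is the same argument sketched in Remark~\ref{rem}.

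For the hard direction (sufficiency), assume the Cesàro condition holds for a fixed $h$ and apply the second inequality of Lemma~\ref{ivdc} with $x_j = 2\pi h\log_b X_j$. The key point is that by stationarity of $(X_n)$, the inner term $e^{i(x_{j+1}+\dots+x_{j+l})} = e_h(\log_b Y_{j+1,j+l})$ has expectation equal to $\e(e_h(\log_b Y_l))$, independently of $j$. Taking expectations in the van der Corput inequality and using $|\sum_{j=1}^{N-l}\cdot| \le N$ trivially inside, one is led to an estimate of the form
\[
\e\left(\left|\frac1N\sum_{n=1}^N e_h(\log_b Y_n)\right|^2\right) \le \frac2L + 2\left|\frac{N+L-1}{L^2 N}\sum_{l=1}^{L-1}(L-l)\,\e(e_h(\log_b Y_l))\right|.
\]
Letting $N\to\infty$ first (the factor $(N+L-1)/N\to 1$) and invoking Lemma~\ref{lem2} to see that the Cesàro condition forces $\frac1L\sum_{l=1}^{L-1}\frac{L-l}{L}\e(e_h(\log_b Y_l))\to 0$, then letting $L\to\infty$, one gets $\limsup_N \e(|\frac1N\sum_{n=1}^N e_h(\log_b Y_n)|^2)=0$, i.e.\ $L^2$ convergence to $0$ of the Weyl averages. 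To upgrade $L^2$ convergence to a.s.\ convergence, I would pass to a subsequence: along $N_j = j^2$ (or any subsequence with $\sum 1/N_j^{\,?}<\infty$ after a second-moment bound), Borel--Cantelli gives a.s.\ convergence along that subsequence, and then I would control the fluctuations between consecutive subsequence indices using that $|e_h(\log_b Y_{n+1})-e_h(\log_b Y_n)|\le 2$, so that $|\frac1N\sum_1^N - \frac1{N_j}\sum_1^{N_j}|$ is small for $N_j\le N<N_{j+1}$ when $N_{j+1}/N_j\to 1$; the choice $N_j=\lfloor j^2\rfloor$ or similar makes both the Borel--Cantelli series converge and the gaps negligible. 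Finally, since this holds for each $h\in\Z^+$ and there are countably many, the exceptional null sets can be unioned, and Lemma~\ref{weyl} finishes the proof.

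The main obstacle is the last step: getting an honest second-moment (or higher-moment) bound on $\frac1N\sum_{n=1}^N e_h(\log_b Y_n)$ that is summable along a subsequence, and then a clean interpolation argument between subsequence points. The van der Corput bound above as written only gives $o(1)$, not a quantitative rate, so one either needs to extract a rate from the hypothesis (which the hypothesis alone does not provide) or, more robustly, to run the subsequence/Borel--Cantelli argument directly on the quantity $\frac1{N_j}\sum_{n=1}^{N_j} e_h(\log_b Y_n)$ using Chebyshev with the $L^2$ bound evaluated at $N=N_j$, choosing $N_j$ growing polynomially fast enough that $\sum_j \e(|\cdot|^2)<\infty$ — which requires the $L^2$ bound to decay like a negative power of $N_j$. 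Since the bare hypothesis gives no rate, the correct route is presumably to bound $\e(|\frac1{N}\sum e_h(\log_b Y_n)|^2)$ by something like $C(h)/L + \varepsilon_L$ uniformly in $N\ge N(L)$, pick $L=L_j\to\infty$ slowly and $N_j$ accordingly, and combine a diagonal subsequence with the bounded-increments interpolation; handling this bookkeeping carefully, while keeping the two limits $N\to\infty$ and $L\to\infty$ in the right order, is the delicate part of the argument.
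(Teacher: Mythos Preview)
Your necessity argument is fine and matches the paper. For sufficiency you start correctly with the van der Corput inequality, but then you take expectations \emph{before} letting $N\to\infty$, obtaining only $L^2$ convergence of the Weyl averages, and you then try to upgrade to a.s.\ convergence via Borel--Cantelli along a subsequence. This last step is a genuine gap: the hypothesis gives no rate on $\varepsilon_L:=\bigl|\frac1L\sum_{l=1}^{L-1}\frac{L-l}{L}\e(e_h(\log_b Y_l))\bigr|$, so you cannot simultaneously arrange (i) $\sum_j \e\bigl|\frac1{N_j}\sum_{n\le N_j}e_h(\log_b Y_n)\bigr|^2<\infty$ and (ii) $N_{j+1}/N_j\to 1$. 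Concretely, taking $N_j=j^2$ your bound becomes $a_{j^2}\le 2/j^2+4\varepsilon_{j^2}$, and nothing prevents $\sum_j\varepsilon_{j^2}=\infty$; conversely, choosing $N_j$ where $\varepsilon_L$ is small forces no control on $N_{j+1}/N_j$. The ``diagonal'' scheme you sketch in the last paragraph does not escape this dichotomy.

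The paper avoids this difficulty by not taking expectations first. It keeps the van der Corput inequality as a \emph{pointwise} bound and applies Birkhoff's ergodic theorem to the stationary sequence $Z_j=e_h(\log_b Y_{j+1,j+l})$, obtaining $\frac1N\sum_{j=1}^{N-l}Z_j\to \e^{\mathcal B_l}(e_h(\log_b Y_l))$ a.s. Passing to $\limsup_N$ inside the inequality (a.s.), then taking expectations and using $\e\bigl(\e^{\mathcal B_l}(\cdot)\bigr)=\e(\cdot)$, one gets directly $\e\bigl(\limsup_N|\frac1N\sum e_h(\log_b Y_n)|^2\bigr)\le 2/L + 2\varepsilon_L$, and letting $L\to\infty$ via Lemma~\ref{lem2} yields $\limsup_N|\cdot|^2=0$ a.s. The missing idea in your argument is precisely this use of the ergodic theorem to obtain an \emph{almost sure} limit of the inner averages, which lets you bound $\e(\limsup)$ rather than $\limsup\e$.
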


\begin{proof}
The direct part derives from the Dominated Convergence Theorem. It remains to prove the converse part.

Consider some $h\in\Z^+$. By Lemma \ref{ivdc}, for $1\leq L\leq N$,
\begin{equation}
\left|\frac{1}{N}\sum_{n=1}^Ne_h(\log_b Y_n)\right|^2 \leq\frac{2}{L}+2\operatorname{Re}\left(\frac{N+L-1}{LN}\sum_{l=1}^{L-1}\frac{L-l}{LN}\sum_{j=1}^{N-l}e_h(\log_b Y_{j+1,j+l})\right). \label{eq2}
\end{equation}
Fix $L\geq 1$ and $l\leq L$, set $Z_j= e_h(\log_b Y_{j+1,j+l})$ ($j\geq 0$) and suppose that $(X_n)$ is stationary. Then $(Z_j)$ is stationary and by Birkhoff's Theorem 
\[
\lim_{N\rightarrow +\infty}\frac {1}{N}\sum_{j=1}^{N-l} Z_j=\lim_{N\rightarrow +\infty}\frac {1}{N-l}\sum_{j=1}^{N-l} Z_j=\e^{{\cal B}_l}(Z_0)=\e^{{\cal B}_l}(e_h(\log_b Y_l))\;\;\;\;\mbox{a.s.} 
\]
where ${\cal B}_l$ stands for the $\sigma$-algebra of invariant sets. Hence \eqref{eq2} yields for every $L\geq 1$
\[
\limsup_{N\rightarrow +\infty}\left|\frac{1}{N}\sum_{n=1}^{N}e_h(\log_b Y_n)\right|^2\leq\frac {2}{L}+ \frac{2}{L}\operatorname{Re}\left(\sum_{l=1}^{L} \frac{L-l}{L}\e^{{\cal B}_l}(e_h(\log_b Y_l))\right)\;\;\;\;\mbox{a.s.}.
\]
But $\e\left(\e^{{\cal B}_l}(e_h(\log_b Y_l)\right)=\e(e_h(\log_b Y_l))$. So for every $L\geq 1$
\begin{equation}
\e\left(\limsup_{N\rightarrow +\infty}\left|\frac{1}{N}\sum_{n=1}^{N}e_h(\log_b Y_n)\right|^2\right)\leq\frac {2}{L}+ \left|\frac{2}{L}\sum_{l=1}^{L} \frac{L-l}{L}\e(e_h(\log_b Y_l))\right|.\label{eq3}
\end{equation}

Suppose now that 
\[
\lim_{L\rightarrow +\infty}\frac{1}{L}\sum_{l=1}^{L}\e(e_h(\log_b Y_l))=0.
\] 
Letting $L$ tends to $+\infty$ in \eqref{eq3} and applying Lemma \ref{lem2} with $a_l=\e(e_h(\log_b Y_l))$ give
\[
\e\left(\limsup_{N\rightarrow +\infty}\left|\frac{1}{N}\sum_{n=1}^{N}e_h(\log_b Y_n)\right|^2\right)=0.
\]
This proves that
\[
\limsup_{N\rightarrow +\infty}\left|\frac{1}{N}\sum_{n=1}^{N}e_h(\log_b Y_n)\right|^2=0\;\;\;\;\mbox{a.s.}. 
\] 
So, according to Lemma \ref{weyl}, our proof is completed.
\end{proof}

Surprisingly, the above necessary and sufficient condition appears in a completely different context in \cite{holewijn}. The two following corollaries are direct consequences of Theorem \ref{th1} and Lemma \ref{lem1}. 

\begin{coro}\label{coro1}
Suppose that $(X_n)$ is stationary and that $Y_n$ tends to be Benford in base $b$. Then $(Y_n)$ is a.s. Benford in base $b$.
\end{coro}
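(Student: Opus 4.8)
The plan is to obtain the corollary as an immediate consequence of Theorem \ref{th1}, whose converse direction requires exactly the Cesàro-type condition
\[
\forall\, h\in\Z^+\,,\;\lim_{L\rightarrow +\infty}\frac{1}{L}\sum_{l=1}^{L}\e(e_h(\log_b Y_l))=0\,.
\]
Since $(X_n)$ is assumed stationary, it suffices to verify this condition, and then Theorem \ref{th1} yields that $(Y_n)$ is a.s. Benford in base $b$.

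To verify the condition, I would first translate the hypothesis ``$Y_n$ tends to be Benford in base $b$'' via Lemma \ref{lem1}: this is equivalent to $\lim_{n\rightarrow+\infty}\e(e_h(\log_b Y_n))=0$ for every $h\in\Z^+$. Then I would invoke the elementary fact that a convergent sequence of complex numbers is Cesàro summable to the same limit: if $a_n\rightarrow 0$ then $\frac{1}{L}\sum_{l=1}^{L}a_l\rightarrow 0$. Applying this with $a_l=\e(e_h(\log_b Y_l))$ gives precisely the displayed condition, and the proof is complete.

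There is essentially no obstacle here: all the substantive work (the van der Corput inequality, Birkhoff's theorem, the Riesz-summation lemma) is already encapsulated in Theorem \ref{th1}, and the only new ingredient is the routine passage from pointwise convergence to convergence of averages. The content of the corollary is therefore that the pointwise hypothesis of Lemma \ref{lem1}, which is often much easier to check in applications, is strong enough to force the averaged hypothesis of Theorem \ref{th1}; one might even remark that this shows the a.s. Benford conclusion is strictly more accessible than it first appears whenever stationarity is available.
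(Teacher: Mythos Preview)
Your proposal is correct and matches the paper's own treatment: the paper simply states that the corollary is a direct consequence of Theorem~\ref{th1} and Lemma~\ref{lem1}, which is precisely the route you take (translate ``tends to be Benford'' via Lemma~\ref{lem1}, pass to Ces\`aro means, and apply Theorem~\ref{th1}).
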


\begin{coro}
Suppose that $(X_n)$ is stationary, that the sequence $(Y_n)$ is a.s. Benford in base $b$ and that $(P_{{\cal M}(Y_n)})$ converges weakly to some probability measure $\nu$. Then $\nu$ equals $\mu_b$.
\end{coro}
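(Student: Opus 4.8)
The plan is to link the two hypotheses through the Fourier coefficients $\e(e_h(\log_b Y_n))$, $h\in\Z^+$, using Theorem~\ref{th1} on one side and Lemma~\ref{lem1} on the other.

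First I would fix $h\in\Z^+$ and observe that, since $e_h(\log_b x)=e_h(\{\log_b x\})=e_h(\log_b{\cal M}_b(x))$ for every $x>0$, the map $x\mapsto e_h(\log_b x)$ on $[1,b[$ factors through ${\cal M}_b$ and extends to a continuous function $g_h$ on the compact interval $[1,b]$ with $g_h(1)=g_h(b)=1$; equivalently $g_h$ is the pullback by $t\mapsto\log_b t$ of the continuous character $e_h$ of the torus $\reel/\Z$. Hence the hypothesis $P_{{\cal M}(Y_n)}\to\nu$ weakly yields
\[
\e\bigl(e_h(\log_b Y_n)\bigr)=\int g_h\,dP_{{\cal M}(Y_n)}\ \longrightarrow\ \int g_h\,d\nu\qquad(n\to+\infty),
\]
and I denote this limit by $c_h$. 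This is the same passage from L\'evy's Theorem to convergence of Fourier coefficients on the torus that is recalled just before Lemma~\ref{lem1}.

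Next I would invoke Theorem~\ref{th1}: since $(X_n)$ is stationary and $(Y_n)$ is a.s. Benford in base $b$, we have $\frac1L\sum_{l=1}^{L}\e(e_h(\log_b Y_l))\to 0$ as $L\to+\infty$. On the other hand, because the sequence $\bigl(\e(e_h(\log_b Y_l))\bigr)_l$ converges to $c_h$, so do its averages $\frac1L\sum_{l=1}^{L}\e(e_h(\log_b Y_l))$. Comparing the two limits gives $c_h=0$, and since $h\in\Z^+$ was arbitrary, $\int e_h(\log_b t)\,d\nu(t)=0$ for every $h\in\Z^+$. Letting $T$ be a random variable with law $\nu$, this says $\e(e_h(\log_b T))=0$ for all $h\in\Z^+$, so Lemma~\ref{lem1} forces $T$ to be Benford in base $b$, i.e. $P_{{\cal M}_b(T)}=\mu_b$; as $\nu$ is carried by $[1,b[$ it coincides with the law of ${\cal M}_b(T)$, whence $\nu=\mu_b$. (Equivalently: the image of $\nu$ on $\reel/\Z$ under $t\mapsto\log_b t$ has zeroth Fourier coefficient $1$ and all others $0$, hence equals Lebesgue measure by uniqueness of Fourier coefficients on the torus, which is precisely $\nu=\mu_b$.)

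There is no substantial obstacle here; the only point that deserves care is the first step — transferring the weak convergence of $P_{{\cal M}(Y_n)}$ to the integrals $\int e_h(\log_b t)\,d(\cdot)$ — and this works exactly because $t\mapsto e_h(\log_b t)$, while defined on $[1,b[$, is continuous as a function on the circle (its values at $1$ and $b$ agree), which is the very observation underlying Lemma~\ref{lem1}.
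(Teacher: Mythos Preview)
Your proof is correct and follows exactly the route the paper intends: the paper states that this corollary is a direct consequence of Theorem~\ref{th1} and Lemma~\ref{lem1}, and your argument spells out precisely that deduction --- use Theorem~\ref{th1} to get Ces\`aro convergence to $0$ of the Fourier coefficients, use weak convergence (via the continuity of $t\mapsto e_h(\log_b t)$ on the circle, as in the discussion preceding Lemma~\ref{lem1}) to get ordinary convergence of those coefficients to $\int e_h(\log_b t)\,d\nu(t)$, compare, and conclude by Lemma~\ref{lem1}.
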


\begin{remark}
Corollary \ref{coro1} cannot be extended to independent non-stationary r.v. $X_n$. Indeed, consider a Benford r.v. $X_1$ and set $X_n=1$ a.s. for $n\geq 2$. Then $Y_n=X_1$ is a Benford r.v., but $(Y_n)$ is not a.s. Benford. Moreover, there exist stationary sequences $(X_n)$ such that $(Y_n)$ is a.s. Benford and such that  $Y_n$ does not tend to be Benford. For example, suppose that $P_{X}=(1/2)\delta_2+(1/2)\delta_3$ and that $X_n=X$ ($n\geq 1$). Then $(Y_n)$ is a.s. Benford in base $10$ because the sequences $(2^n)$ and $(3^n)$ are natural-Benford in base $10$. But $P_{Y_n}$ does not converge weakly to $\mu_{10}$. We can also consider i.i.d. r.v.'s $X_n$ such that $P_{X_1}=(1/2)\delta_{b^x}+(1/2)\delta_{b^{x+1/2}}$ where $x\in]0;1[$ is irrational. Then $\e(e_h(\log_b Y_n))=e^{2i\pi hnx}$ when $h$ is even and $\e(e_h(\log_b Y_n))=0$ when $h$ is odd. So $(Y_n)$ is a.s. Benford in base $b$ by Theorem \ref{th1} and $Y_n$ does not tend to be Benford in base $b$ since $\left|\,\e(e_h(\log_b Y_n))\right|=1$ when $h$ is even. 
\end{remark}

\subsection{The second main result}

Theorem \ref{th2} below is a direct consequence of Proposition \ref{prop} which is a slight generalization of a (surprisingly little known) result due to Davenport, Erd\H{o}s and Le Veque \cite{davenport}. Proposition \ref{prop} gives a general condition (involving $L^p$-norm) ensuring that the arithmetic mean of bounded random variables converges almost surely to $0$. It will be used in Section \ref{bound}. The result in \cite{davenport} would have been enough to derive the results featuring in Section \ref{bound}, but we think that our more general version may be of interest. We will use the two following lemmas.

\begin{lemma}\label{lem}

Let $(u_N)$ and $(v_N)$ be two sequences of positive numbers and suppose that $v_N>1$ ($N\geq 1$) and that $(v_N)$ is non-decreasing. Consider an increasing sequence $(M_m)$ of positive integers such that
\begin{equation} \label{eq1}
M_{m+1}\geq \frac{v_{M_m}}{v_{M_m}-1}M_m
\end{equation} 
and denote by $U_m$ the arithmetic mean of the numbers $u_{M_m}, u_{M_m+1}, \dots, u_{M_{m+1}-1}$.
Then
\[
\left(\sum_{N=1}^{\infty}\frac{u_Nv_N}{N}<\infty\right)\Rightarrow\left(\sum_{m=1}^{\infty}U_m<\infty\right)\,.
\]
\end{lemma}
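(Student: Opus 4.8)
The plan is to prove the implication in the strongest possible form, namely by showing the termwise/blockwise domination
\[
\sum_{m=1}^{\infty}U_m\le\sum_{N=1}^{\infty}\frac{u_Nv_N}{N},
\]
so that finiteness of the right-hand side immediately gives finiteness of the left-hand side. The whole argument rests on rewriting the hypothesis \eqref{eq1} in a usable form. Since $v_{M_m}>1$, multiplying \eqref{eq1} through by $v_{M_m}-1>0$ gives $M_{m+1}(v_{M_m}-1)\ge v_{M_m}M_m$, i.e. $v_{M_m}(M_{m+1}-M_m)\ge M_{m+1}$; note in passing that this already forces $M_{m+1}>M_m$. Hence
\[
\frac{1}{M_{m+1}-M_m}\le\frac{v_{M_m}}{M_{m+1}}.
\]
This is the key inequality: it says the averaging denominator defining $U_m$ is not too small.

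Next I would compare $v_{M_m}/M_{m+1}$ with $v_N/N$ for each index $N$ in the block $M_m\le N\le M_{m+1}-1$. Because $(v_N)$ is non-decreasing and $N\ge M_m$, one has $v_{M_m}\le v_N$; because $N<M_{m+1}$, one has $1/M_{m+1}\le 1/N$; multiplying, $v_{M_m}/M_{m+1}\le v_N/N$. Since $U_m$ is the arithmetic mean of the $M_{m+1}-M_m$ numbers $u_{M_m},\dots,u_{M_{m+1}-1}$, combining this with the previous display yields
\[
U_m=\frac{1}{M_{m+1}-M_m}\sum_{N=M_m}^{M_{m+1}-1}u_N\le\frac{v_{M_m}}{M_{m+1}}\sum_{N=M_m}^{M_{m+1}-1}u_N\le\sum_{N=M_m}^{M_{m+1}-1}\frac{u_Nv_N}{N}.
\]

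Finally, since $(M_m)$ is (strictly) increasing, the blocks $\{M_m,\dots,M_{m+1}-1\}$, $m\ge 1$, are pairwise disjoint and their union is $\{N:N\ge M_1\}$. Summing the last estimate over $m\ge 1$ therefore gives
\[
\sum_{m=1}^{\infty}U_m\le\sum_{N=M_1}^{\infty}\frac{u_Nv_N}{N}\le\sum_{N=1}^{\infty}\frac{u_Nv_N}{N}<\infty,
\]
which is the desired conclusion. I do not expect a real obstacle here; the only point that needs care is using \eqref{eq1} in the correct direction, i.e. to lower-bound the block lengths $M_{m+1}-M_m$ by $M_{m+1}/v_{M_m}$, which is precisely what prevents the averaging weights in $U_m$ from outweighing the geometric decay hidden in the factor $v_N/N$.
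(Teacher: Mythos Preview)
Your proof is correct and follows essentially the same route as the paper's: both rewrite \eqref{eq1} as $\frac{M_{m+1}}{M_{m+1}-M_m}\le v_{M_m}$, combine this with the monotonicity of $(v_N)$ and the trivial bound $N\le M_{m+1}$ on the block to obtain $U_m\le\sum_{N=M_m}^{M_{m+1}-1}\frac{u_Nv_N}{N}$, and then sum over the disjoint blocks. Your write-up is in fact slightly more explicit than the paper's about the disjointness of the blocks in the final summation step.
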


\begin{proof}
Fix $m\geq 1$. Then 
\begin{align*}
U_m&=    \frac{1}{M_{m+1}-M_m}\sum_{N=M_m}^{M_{m+1}-1}u_N\\
   &\leq \frac{M_{m+1}}{M_{m+1}-M_m}\sum_{N=M_m}^{M_{m+1}-1}\frac{u_N}{N}\\
   &\leq \sum_{N=M_m}^{M_{m+1}-1}\frac{u_Nv_N}{N}
\end{align*}
because \eqref{eq1} is equivalent to
\[
\frac{M_{m+1}}{M_{m+1}-M_m}\leq v_{M_m}
\]
and the sequence $(v_N)$ is non-decreasing. This concludes the proof since the numbers $\frac{u_Nv_N}{N}$ are positive.
\end{proof}

\begin{lemma}\label{lemm}
Let $(a_n)$ be a sequence of complex numbers satisfying $|a_n|\leq 1$ ($n\geq 1$). Set $b_N=(1/N)\sum_{n=1}^Na_n$ and let $(M_m)$ be an increasing sequence of positive integers such that $\lim_m(M_{m+1}/M_m)=1$. Denote by $c_m$ the arithmetic mean of the numbers $b_{M_m}, b_{M_m+1}, \dots, b_{M_{m+1}-1}$. Then
\[
\lim_{m\rightarrow+\infty}\max_{M_m\leq N<M_{m+1}}\left|b_N-c_m\right|=0\,.
\]
In particular the sequences $(b_N)$ and $(c_m)$ are simultaneously convergent and have the same limit when they converge.
\end{lemma}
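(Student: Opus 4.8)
The plan is to control the oscillation of $b_N$ over each block $[M_m, M_{m+1})$ by comparing $b_N$ directly to $b_{M_m}$, and then absorb the comparison with $c_m$ since $c_m$ is an average of the $b_N$ over the same block. The key quantitative input is that $|a_n|\le 1$, so for $M_m\le N<M_{m+1}$ we have
\[
b_N-b_{M_m}=\frac{1}{N}\sum_{n=1}^N a_n-\frac{1}{M_m}\sum_{n=1}^{M_m}a_n
=\left(\frac{1}{N}-\frac{1}{M_m}\right)\sum_{n=1}^{M_m}a_n+\frac{1}{N}\sum_{n=M_m+1}^{N}a_n.
\]
The first term is bounded in modulus by $M_m\left(\frac{1}{M_m}-\frac{1}{N}\right)=1-\frac{M_m}{N}\le 1-\frac{M_m}{M_{m+1}}$, and the second by $\frac{N-M_m}{N}\le 1-\frac{M_m}{M_{m+1}}$ as well. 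Hence
\[
\max_{M_m\le N<M_{m+1}}\left|b_N-b_{M_m}\right|\le 2\left(1-\frac{M_m}{M_{m+1}}\right),
\]
which tends to $0$ because $M_{m+1}/M_m\to 1$.

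First I would establish the display above. Next, since $c_m$ is the arithmetic mean of $b_{M_m},\dots,b_{M_{m+1}-1}$, the triangle inequality gives $\left|c_m-b_{M_m}\right|\le \max_{M_m\le N<M_{m+1}}\left|b_N-b_{M_m}\right|$, so $c_m$ satisfies the same bound (with a factor $2$) and in particular $\left|c_m-b_{M_m}\right|\to 0$. Then for any $N$ with $M_m\le N<M_{m+1}$,
\[
\left|b_N-c_m\right|\le\left|b_N-b_{M_m}\right|+\left|b_{M_m}-c_m\right|\le 4\left(1-\frac{M_m}{M_{m+1}}\right),
\]
and taking the maximum over $N$ in the block and then letting $m\to\infty$ gives the claimed limit.

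For the final sentence: if $(c_m)$ converges to some $\ell$, then for $M_m\le N<M_{m+1}$ we have $\left|b_N-\ell\right|\le\left|b_N-c_m\right|+\left|c_m-\ell\right|$, and as $N\to\infty$ the index $m=m(N)\to\infty$, so both terms vanish; hence $b_N\to\ell$. Conversely, if $b_N\to\ell$, then each $c_m$, being an average of $b_N$'s with all indices in $[M_m,M_{m+1})\to\infty$, also converges to $\ell$ (an average of terms close to $\ell$ is close to $\ell$); alternatively this is immediate from $\left|c_m-\ell\right|\le\max_{M_m\le N<M_{m+1}}\left|b_N-\ell\right|$.

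There is no real obstacle here; the only point requiring a moment's care is the bound on the first term $\left(\frac{1}{N}-\frac{1}{M_m}\right)\sum_{1}^{M_m}a_n$, where one must use $\frac1N\le\frac1{M_m}$ together with $|a_n|\le 1$ to get $M_m\cdot\bigl(\tfrac{1}{M_m}-\tfrac{1}{N}\bigr)=1-\tfrac{M_m}{N}$ rather than something that blows up. Everything else is the triangle inequality and the hypothesis $M_{m+1}/M_m\to1$.
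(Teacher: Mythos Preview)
Your proof is correct and follows essentially the same route as the paper's: both control the oscillation of $b_N$ over the block $[M_m,M_{m+1})$ by elementary estimates using $|a_n|\le 1$, obtaining a bound of order $1-M_m/M_{m+1}\to 0$. The only cosmetic difference is that you pivot through $b_{M_m}$ and then apply the triangle inequality (yielding the constant $4$), whereas the paper bounds $|b_{N_0}-b_N|\le 2|N_0-N|/N_0$ directly for arbitrary $N_0,N$ in the block and averages over $N$ (yielding the constant $2$).
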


\begin{proof}
Fix $N_0$ such that $M_m\leq N_0<M_{m+1}$. Then 
\[
\left|b_{N_0}-c_m\right| \leq\frac{1}{M_{m+1}-M_m}\sum_{N=M_m}^{M_{m+1}-1}\left|b_{N_0}-b_N\right|\,.
\]
But for any $N$ such that $M_m\leq N<M_{m+1}$
\begin{align*}
\left|b_{N_0}-b_N\right| & =\frac{1}{N_0}\left|\sum_{n=1}^{N_0}a_n-\frac{N_0}{N}\sum_{n=1}^{N}a_n\right|\\
                         & \leq  \frac{1}{N_0}\left(\left|\sum_{n=1}^{N_0}a_n-\sum_{n=1}^{N}a_n\right|+\left|\sum_{n=1}^{N}a_n-\frac{N_0}{N}\sum_{n=1}^{N}a_n\right|\right)\\
                         & \leq \frac{2|N_0-N|}{N_0}
\end{align*}
since $|a_n|\leq 1$. Hence
\begin{align*}
\left|b_{N_0}-c_m\right| & \leq \frac{1}{N_0}\frac{2}{M_{m+1}-M_m}\sum_{N=M_m}^{M_{m+1}-1}|N_0-N|\\
                         & \leq 2\frac{M_{m+1}-M_m}{N_0}\,.
\end{align*}
This concludes the proof since $\lim_m(M_{m+1}/M_m)=1$.
\end{proof}

\begin{proposition}\label{prop}

Let $(Z_n)$ be a sequence of complex valued r.v. such that $|Z_n|\leq 1$ ($n\geq 1$) and, for $N\geq 1$, set $T_N=(1/N)\sum_{n=1}^NZ_n$.
Then
\[
\left(\sum_{N=1}^{\infty}\frac{\e|T_N|^p}{N}<+\infty\;\;\;\mbox{ for some }p\geq 1\right)\Rightarrow\left(\lim_{N\rightarrow +\infty} T_N=0\;\mbox{a.s.}\right)\,.
\]
\end{proposition}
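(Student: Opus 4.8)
The plan is to follow the Davenport--Erd\H{o}s--Le Veque strategy and feed the hypothesis into Lemmas \ref{lem} and \ref{lemm}. Write $u_N=\e|T_N|^p$, so that the assumption reads $\sum_{N\ge 1}u_N/N<\infty$. If $u_N=0$ for all large $N$, then $T_N=0$ a.s.\ for each such $N$ and, intersecting countably many almost sure events, $T_N\to 0$ a.s.; so we may assume that $s_N:=\sum_{k\ge N}u_k/k$ is $>0$ for every $N$. The sequence $(s_N)$ is then positive, non-increasing and tends to $0$, and $s_N-s_{N+1}=u_N/N$.

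First I would build a slowly growing weight. From the elementary inequality $a-b\le 2\sqrt a(\sqrt a-\sqrt b)$ for $0\le b\le a$ (equivalently $2\sqrt{ab}\le a+b$) one gets, by telescoping, $\sum_{N}\frac{u_N/N}{\sqrt{s_N}}=\sum_N\frac{s_N-s_{N+1}}{\sqrt{s_N}}\le 2\sum_N(\sqrt{s_N}-\sqrt{s_{N+1}})\le 2\sqrt{s_1}<\infty$. Set $v_N=\max\!\big(2,\,1/\sqrt{s_N}\big)$. Then $v_N>1$, the sequence $(v_N)$ is non-decreasing (since $(s_N)$ is non-increasing), $v_N\to+\infty$ (since $s_N\to 0$), and $\sum_N\frac{u_Nv_N}{N}\le 2\sum_N\frac{u_N}{N}+\sum_N\frac{u_N/N}{\sqrt{s_N}}<\infty$.

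Next, define $(M_m)$ recursively by $M_1=1$ and $M_{m+1}=\lceil\frac{v_{M_m}}{v_{M_m}-1}M_m\rceil$. Since $v_{M_m}>1$ we have $\frac{v_{M_m}}{v_{M_m}-1}>1$, hence $M_{m+1}>M_m$ and \eqref{eq1} holds; since $M_m\to+\infty$ and $v_N\to+\infty$ we have $1\le M_{m+1}/M_m\le\frac{v_{M_m}}{v_{M_m}-1}+\frac{1}{M_m}\to 1$. Let $U_m$ be the arithmetic mean of $u_{M_m},\dots,u_{M_{m+1}-1}$; Lemma \ref{lem} gives $\sum_m U_m<\infty$. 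For fixed $\omega$, apply Lemma \ref{lemm} with $a_n=Z_n(\omega)$, so that $b_N=T_N(\omega)$, and let $c_m$ be the arithmetic mean of $b_{M_m},\dots,b_{M_{m+1}-1}$. By convexity of $t\mapsto t^p$ on $[0,\infty)$ (here $p\ge1$) together with $|c_m|\le\frac{1}{M_{m+1}-M_m}\sum_{N=M_m}^{M_{m+1}-1}|T_N|$, one has $|c_m|^p\le\frac{1}{M_{m+1}-M_m}\sum_{N=M_m}^{M_{m+1}-1}|T_N|^p$, hence $\e|c_m|^p\le U_m$ and $\sum_m\e|c_m|^p<\infty$. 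Interchanging sum and expectation, $\sum_m|c_m|^p<\infty$ a.s., so $c_m\to0$ a.s.; by the last assertion of Lemma \ref{lemm}, $T_N\to0$ a.s., which is the conclusion.

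The step demanding the most care is the compatibility of the two lemmas: Lemma \ref{lem} forces $(M_m)$ to grow at least like $\frac{v_{M_m}}{v_{M_m}-1}M_m$, whereas Lemma \ref{lemm} requires $M_{m+1}/M_m\to1$; this is precisely why $(v_N)$ must be chosen so that $v_N\to+\infty$, and why a naive geometric choice of $(M_m)$ would not work. Manufacturing such a $(v_N)$ from the tails $s_N$ of the convergent series $\sum u_N/N$ is the device that reconciles the two constraints.
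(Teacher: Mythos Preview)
Your proof is correct and follows essentially the same Davenport--Erd\H{o}s--Le Veque route as the paper: build a weight $(v_N)$, define $(M_m)$ recursively, use Lemma~\ref{lem} to get $\sum_m U_m<\infty$, deduce that the block averages tend to $0$ a.s., and transfer this to $(T_N)$ via Lemma~\ref{lemm}. The only cosmetic differences are that you construct $(v_N)$ explicitly from the tails $s_N$ (the paper merely cites \cite{davenport} for its existence), and that you apply Lemma~\ref{lemm} with $b_N=T_N(\omega)$ and bound $\e|c_m|^p$ directly by Jensen, whereas the paper takes $b_N=|T_N(\omega)|$ and passes through the auxiliary averages $V_m$ and $W_m$; your variant fits the hypotheses of Lemma~\ref{lemm} more literally.
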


\begin{proof}
Fix $p\geq 1$, set $u_N=\e|T_N|^p$ and suppose that $\sum_{N=1}^{\infty}\frac{u_N}{N}<\infty$. Consider a non-decreasing sequence $(v_N)$ of real numbers such that $v_N>1$ ($N\geq 1$), $\lim_Nv_N=\infty$ and $\sum_{N=1}^{\infty}\frac{u_Nv_N}{N}<\infty$ (see \cite{davenport} for an example, among many other possibilities, of one such sequence $(v_N)$). Set $M_1=1$ and for $m\geq 1$ define $M_{m+1}$ as the lowest integer greater than or equals to $\frac{v_{M_m}}{v_{M_m}-1}M_m$. So $(M_m)$ satisfies the hypothesis of Lemmas \ref{lem} and \ref{lemm}. Define the numbers $U_m$ as in Lemma \ref{lem} and set
\[
V_m=\frac{1}{M_{m+1}-M_m}\sum_{N=M_m}^{N=M_{m+1}-1}|T_N|^p\;\;\mbox{ and }W_m=\frac{1}{M_{m+1}-M_m}\sum_{N=M_m}^{N=M_{m+1}-1}|T_N|\,.
\]
According to Lemma \ref{lem} the series  $\sum_mU_m$ converges. But here $U_m=\e(V_m)$ and the r.v.'s $V_m$ are non-negative. So the convergence of $\sum_mU_m$ and the Beppo Levi's Monotone Convergence Theorem imply the a.s. convergence of $\sum_mV_m$ and this proves that $(V_m)$ converge a.s. to $0$. Hence, by Jensen Inequality,  $(W_m^p)$ converges a.s. to $0$. Applying Lemma \ref{lemm} with $c_m=W_m(\omega)$ and $b_N=|T_N(\omega)|$ yields the a.s. convergence to $0$ of $(|T_N|)$. This concludes the proof of Proposition \ref{prop}. 
\end{proof}

Taking $Z_n= e_h(\log_b Y_n)= e_h\left(\sum_1^n\log_b X_k\right)$ 
and combining Section \ref{preli} with Proposition \ref{prop} yields the following theorem.

\begin{theorem}\label{th2}
Set $T_N^{(h)}=(1/N)\sum_{n=1}^Ne_h\left(\sum_1^n\log_b X_k\right)$ and suppose that for all $h\neq 0$ there exists some $p\geq 1$ such that $\sum_{N=1}^{\infty}\frac{\e|T_N^{(h)}|^p}{N}<+\infty$. Then $(Y_n)$ is a.s. Benford in base $b$.
\end{theorem}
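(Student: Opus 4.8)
The plan is to derive Theorem \ref{th2} as an immediate application of Proposition \ref{prop}, frequency by frequency, followed by a countable-intersection step. First I would fix $h\in\Z^+$ and set $Z_n=e_h(\log_b Y_n)$. Since $\log_b Y_n=\sum_{k=1}^{n}\log_b X_k$, this is exactly the sequence $Z_n=e_h\bigl(\sum_{1}^{n}\log_b X_k\bigr)$ appearing in the statement, and each $Z_n$ has modulus $1$, so it satisfies the boundedness hypothesis $|Z_n|\le 1$ of Proposition \ref{prop}. The arithmetic mean $(1/N)\sum_{n=1}^{N}Z_n$ is then literally $T_N^{(h)}$, so the assumed summability $\sum_{N\ge 1}\e|T_N^{(h)}|^p/N<\infty$ for some $p\ge 1$ is precisely the hypothesis of Proposition \ref{prop}. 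That proposition therefore yields $T_N^{(h)}\to 0$ a.s.

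Next I would remove the dependence of the null set on $h$. Let $\Omega_h$ be the event $\{T_N^{(h)}\to 0\}$, which has probability $1$ by the previous step, and put $\Omega_0=\bigcap_{h\in\Z^+}\Omega_h$. Since $\Z^+$ is countable, $P(\Omega_0)=1$, and on $\Omega_0$ one has $\lim_N(1/N)\sum_{n=1}^{N}e_h(\log_b Y_n)=0$ simultaneously for every $h\in\Z^+$. Negative frequencies require no separate treatment: $e_{-h}=\overline{e_h}$, so $|T_N^{(-h)}|=|T_N^{(h)}|$ and the hypothesis stated for all $h\neq 0$ is the same as the hypothesis for $h\in\Z^+$.

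Finally, the second statement of Lemma \ref{weyl}, applied to the sequence of positive r.v.'s $(Y_n)$, says exactly that $(Y_n)$ is a.s. Benford in base $b$ if and only if for every $h\in\Z^+$ one has $(1/N)\sum_{n=1}^{N}e_h(\log_b Y_n)\to 0$ a.s. We have just verified this on the probability-one event $\Omega_0$, so $(Y_n)$ is a.s. Benford in base $b$, which completes the plan.

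I do not expect a genuine obstacle here once Proposition \ref{prop} is in hand; the only points that need a moment's care are the identification $Z_n=e_h\bigl(\sum_{1}^{n}\log_b X_k\bigr)$ — so that the quantity controlled by the hypothesis is the very same $T_N$ occurring in the proposition — and the countable-intersection step making the a.s. convergence uniform in $h$. The real content of the theorem is entirely absorbed into Proposition \ref{prop}, and through it into the Davenport--Erd\H{o}s--Le Veque type argument underlying it.
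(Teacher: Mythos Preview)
Your proposal is correct and follows exactly the paper's own route: the paper simply writes that taking $Z_n=e_h(\log_b Y_n)=e_h\bigl(\sum_1^n\log_b X_k\bigr)$ and combining Proposition~\ref{prop} with the preliminaries of Section~\ref{preli} yields the theorem. Your write-up merely makes explicit the countable-intersection step and the appeal to Lemma~\ref{weyl}, which the paper leaves implicit.
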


\section{Applications of Theorems \ref{th1} and \ref{th2}}\label{appli}

This section is devoted to some applications of Theorems \ref{th1} and \ref{th2}.

\subsection{When $(X_n)$ is stationary}

In this section, we investigate the cases of stationary log-normal r.v.'s, exchangeable r.v.'s and stationary $1$-dependent r.v.'s.

\subsubsection{The case of stationary log-normal r.v.'s}
Suppose that $(\log_b X_n)$ or equivalently $(\log X_n)$ is a gaussian sequence. Then, according to Lemma \ref{lem1}, $Y_n$ tends to be Benford if and only if 
\[
\lim_{n\rightarrow +\infty}\sum_{1\leq k,\,l\leq n}\Cov(\log X_k,\log X_l)=+\infty.
\]
Such a condition holds, for example, when $\log X_n=W_{t_n}$ and where $(W_t)_t$ is a brownian motion or a brownian bridge and $(t_n)$ is any sequence of indexes.

If we suppose in addition that $(\log X_n)$ is stationary, then the above necessary and sufficient condition becomes 
\begin{equation}
\lim_{n\rightarrow +\infty}(n\gamma(1)+2(n-1)\gamma(2)+\dots+2\gamma(n))=+\infty \label{eq4}
\end{equation}
where $\gamma(k)=\Cov(\log X_1,\log X_k)$ $(k=1,\dots,n)$. Therefore Theorem \ref{th1} proves the following result.

\begin{proposition}\label{th3}
If $(\log X_n)$ is a stationary gaussian sequence and satisfies \eqref{eq4}, then $Y_n$ tends to be Benford and $(Y_n)$ is a.s. Benford.

\end{proposition}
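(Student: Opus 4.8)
The plan is to verify the hypotheses of Theorem~\ref{th1} using the Gaussian structure, and then to read off the "tends to be Benford" part directly from Lemma~\ref{lem1}. First I would fix $h\in\Z^+$ and compute $\e(e_h(\log_b Y_n))$ explicitly. Writing $S_n=\sum_{k=1}^n\log_b X_k$, stationarity of the Gaussian sequence $(\log_b X_k)$ makes $S_n$ a centered (or at least non-degenerate) Gaussian variable whose variance is, up to the factor $(\log b)^{-2}$, exactly the quantity appearing in \eqref{eq4}, namely $\sigma_n^2:=\mathrm{Var}(S_n)=\big(n\gamma(1)+2(n-1)\gamma(2)+\cdots+2\gamma(n)\big)/(\log b)^2$ (here I am absorbing a possible nonzero mean, which contributes only a unimodular phase factor and does not affect moduli). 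Using the Gaussian characteristic function, $|\e(e_h(S_n))|=\exp(-2\pi^2h^2\sigma_n^2)$.

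The second step is to invoke hypothesis \eqref{eq4}: it says precisely that $\sigma_n^2\to+\infty$, hence $|\e(e_h(\log_b Y_n))|=\exp(-2\pi^2h^2\sigma_n^2)\to 0$ for every $h\in\Z^+$. By Lemma~\ref{lem1} this is exactly the statement that $Y_n$ tends to be Benford in base $b$. That disposes of the first assertion; note this part does not even use stationarity beyond what is needed to write \eqref{eq4}, and indeed the same computation with the general covariance sum gives the necessary-and-sufficient statement asserted just before the proposition.

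For the second assertion, that $(Y_n)$ is a.s.\ Benford, I would check the Cesàro condition in Theorem~\ref{th1}: for each $h\in\Z^+$ one needs $\frac{1}{L}\sum_{l=1}^L\e(e_h(\log_b Y_l))\to 0$. Since $|\e(e_h(\log_b Y_l))|=\exp(-2\pi^2h^2\sigma_l^2)$ and $\sigma_l^2\to+\infty$, the summands tend to $0$ in modulus, so their Cesàro average tends to $0$ as well. Theorem~\ref{th1} (the converse direction) then yields that $(Y_n)$ is a.s.\ Benford in base $b$. Alternatively, one can simply cite Corollary~\ref{coro1}: the sequence $(X_n)$ is stationary and we have just shown $Y_n$ tends to be Benford, so $(Y_n)$ is a.s.\ Benford. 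The latter route is cleaner and I would use it.

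I do not anticipate a serious obstacle; the only point requiring a little care is the treatment of the mean. If $(\log X_n)$ has nonzero mean $\mu$, then $S_n$ has mean $n\mu/\log b$, so $\e(e_h(S_n))=\exp(2i\pi h n\mu/\log b)\exp(-2\pi^2 h^2\sigma_n^2)$; the first factor has modulus $1$, so it is irrelevant both for Lemma~\ref{lem1} (which only sees that the modulus goes to $0$) and for the Cesàro condition (again the moduli go to $0$, dominating the average). So the argument goes through verbatim for arbitrary mean. The one genuinely substantive input is the identification of $\mathrm{Var}(S_n)$ with the expression in \eqref{eq4}, which is the standard expansion $\mathrm{Var}(\sum_{k=1}^n \xi_k)=\sum_{k,l}\gamma(|k-l|)=n\gamma(0)+2\sum_{j=1}^{n-1}(n-j)\gamma(j)$ of a stationary sequence; matching indices with the paper's convention $\gamma(k)=\Cov(\log X_1,\log X_k)$ (so that $\gamma(1)$ plays the role of the variance) gives exactly \eqref{eq4}.
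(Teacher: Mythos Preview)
Your proposal is correct and follows essentially the same approach as the paper: compute $|\e(e_h(\log_b Y_n))|=\exp(-2\pi^2h^2\sigma_n^2)$ via the Gaussian characteristic function, use \eqref{eq4} to send this to $0$ (giving ``tends to be Benford'' by Lemma~\ref{lem1}), and then invoke Theorem~\ref{th1}/Corollary~\ref{coro1} for the a.s.\ statement. Your treatment of the mean and the identification of the variance with the expression in \eqref{eq4} are more explicit than the paper's terse discussion, but the argument is the same.
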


The sequence $(X_n)$ statisfies Condition \eqref{eq4} especially when $\gamma(n)\geq 0$ for $n\geq 2$ (for instance when $\log X_n=O_{t_n}$ and $(O_t)_t$ is an Ornstein-Uhlenbeck process) or when the numbers $\gamma(k)$ are summable \cite[p. 215]{doukhan}.

\subsubsection{The case of exchangeable r.v.'s}
In this section, we consider a r.v. $U$ and a sequence of i.i.d.  r.v.'s $(Z_n)$ independent of $U$. For each $n$, we let   
\begin{equation}
X_n=g(U,Z_n) \label{cond1}
\end{equation}
where $g$ is any positive measurable function. 

\begin{proposition}\label{propp}
If the r.v.'s $X_n$ satisfy \eqref{cond1} and if for every positive integer $h$
\begin{equation}
P_U\left(\left\{u : \left|\,\e(e_h(\log_b g(u,Z_1)))\right|<1\right\}\right)=1, \label{eq5}
\end{equation}
then $Y_n$ tends to be Benford in base $b$ and $(Y_n)$ is a.s. Benford in base $b$.
\end{proposition}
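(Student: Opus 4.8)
The plan is to condition on $U$ and exploit the fact that, given $U=u$, the variables $g(u,Z_1),g(u,Z_2),\dots$ are i.i.d.; combined with the exchangeability of $(X_n)$, this reduces the statement to tools already in hand. Concretely, I would first observe that $(X_n)$ is exchangeable: for any finite permutation $\sigma$ the sequences $(g(U,Z_{\sigma(n)}))_n$ and $(g(U,Z_n))_n$ have the same law, since $(Z_{\sigma(n)})_n$ and $(Z_n)_n$ have the same law and both are independent of $U$. In particular $(X_n)$ is stationary, so Corollary \ref{coro1} will be applicable as soon as I can show that $Y_n$ tends to be Benford in base $b$.

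The heart of the argument is the identity
\[
\e\bigl(e_h(\log_b Y_n)\bigr)=\e\bigl(\psi_h(U)^n\bigr),\qquad\text{where }\psi_h(u)=\e\bigl(e_h(\log_b g(u,Z_1))\bigr).
\]
To get it I would write $e_h(\log_b Y_n)=\prod_{k=1}^n e_h(\log_b g(U,Z_k))$ (using $\log_b Y_n=\sum_{k=1}^n\log_b g(U,Z_k)$), condition on $U$, and use that $Z_1,\dots,Z_n$ are i.i.d.\ and independent of $U$: conditionally on $U$ the factors are independent with common conditional mean $\psi_h(U)$, so the conditional expectation of the product is $\psi_h(U)^n$, and taking expectations gives the identity. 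Here $\psi_h$ is a bounded measurable function with $|\psi_h|\le 1$; note also that, conditionally on $U=u$, this is exactly the characteristic quantity governing the i.i.d.\ product $Y_n$.

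To finish, hypothesis \eqref{eq5} is precisely the statement that $|\psi_h(U)|<1$ almost surely, for every $h\in\Z^+$. Hence $\psi_h(U)^n\to 0$ a.s., and since $|\psi_h(U)^n|\le 1$ the Dominated Convergence Theorem gives $\e(e_h(\log_b Y_n))=\e(\psi_h(U)^n)\to 0$ for all $h\in\Z^+$. By Lemma \ref{lem1} this says that $Y_n$ tends to be Benford in base $b$; and then, $(X_n)$ being stationary, Corollary \ref{coro1} yields that $(Y_n)$ is a.s.\ Benford in base $b$. (As a variant one may bypass Corollary \ref{coro1}: from $\e(e_h(\log_b Y_l))\to 0$ the Cesàro averages $\frac{1}{L}\sum_{l=1}^L\e(e_h(\log_b Y_l))$ also tend to $0$, so the criterion of Theorem \ref{th1} is met directly.)

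I do not expect a genuine obstacle here. The only slightly delicate points are the measurability of $u\mapsto\psi_h(u)$ and the rigorous justification of the conditioning step leading to the displayed identity, both of which are routine because $g$ is measurable and $(Z_n)$ is independent of $U$. The substantive input is the observation that \eqref{eq5} forces the modulus of the per-fibre value $\psi_h(U)$ to stay strictly below $1$, which makes its $n$-th powers — and hence, after averaging over $U$, the quantities $\e(e_h(\log_b Y_n))$ — collapse to $0$.
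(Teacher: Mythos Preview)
Your proof is correct and follows essentially the same route as the paper: both condition on $U$ to obtain $\e(e_h(\log_b Y_n))=\int \e[e_h(\log_b g(u,Z_1))]^n\,dP_U(u)$ (your $\e(\psi_h(U)^n)$), then use \eqref{eq5} and dominated convergence to send this to $0$, and conclude via Theorem~\ref{th1}/Corollary~\ref{coro1}. Your explicit observation that exchangeability implies stationarity is a small clarifying addition the paper leaves implicit.
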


\begin{proof}
By Theorem \ref{th1} and Lemma \ref{lem1} we only need to prove that $\lim_n \e(e_h(\log_b Y_n))=0$ for all $h\neq 0$. Fix $h\neq 0$ and $n\geq1$ and suppose that $X_1,\dots,X_n$ satisfy \eqref{cond1}. According to the conditions on $U$ and $(Z_n)$,
\begin{align*}
\e(e_h(\log_b Y_n))&=\e[e_h(\log_b g(U,Z_1))\times\dots\times e_h(\log_b g(U,Z_n))]\\
                   &=\int_{\reel} \e[e_h(\log_b g(U,Z_1))\times\dots\times e_h(\log_b g(U,Z_n))/U=u] dP_U(u)\\
                   &=\int_{\reel} \e[e_h(\log_b g(u,Z_1))\times\dots\times e_h(\log_b g(u,Z_n))] dP_U(u)\\
                   &=\int_{\reel} \e[e_h(\log_b g(u,Z_1))]\times\dots\times \e[e_h(\log_b g(u,Z_n))] dP_U(u)\\
                   &=\int_{\reel} \e[e_h(\log_b g(u,Z_1))]^n dP_U(u).
\end{align*}
Therefore
\[
|\e(e_h(\log_b Y_n))|\leq \int_{\reel} |\e[e_h(\log_b g(u,Z_1))]|^n dP_U(u).
\]
But $|\e[e_h(\log_b g(.,Z_1))]|^n$ converges $P_U$-a.s. to $0$, as $n\rightarrow +\infty$, when \eqref{eq5} holds. This concludes the proof with the  Dominated Convergence Theorem. 
\end{proof}

\begin{remark}
Condition \eqref{eq5} in Proposition \ref{propp} is satisfied in particular when $g(u,Z_1)$ admits a density for $P_U$-almost-all $u$. For example when $Z_1$ admits a density, $g$ is of class $C^1$ and, for $P_U$-almost-all $u$, the set of zeros of $\frac{\partial g}{\partial z}(u,.)$ is finite.
\end{remark}

\subsubsection{The case of stationary $1$-dependent r.v.'s}

For the sake of brevity, we only deal with the case where the r.v.'s $X_n$ are 1-dependent. However, our results can be extended to general $m$-dependence. In this section, we suppose that 
\begin{equation}
X_n=g(Z_n,Z_{n+1}) \label{cond2}
\end{equation}
where the r.v.'s $Z_n$ are i.i.d. and $g$ is any positive measurable function. 

\begin{proposition}\label{proppp}
If the r.v.'s $X_n$ satisfy \eqref{cond2} and if for every positive integer $h$
\begin{equation}
P_{(Z_1,Z_3)}\left(\left\{(z_1,z_3) : \left|\,\e[e_h(\log_b(g(z_1,Z_2)g(Z_2,z_3)]\right|<1\right\}\right)>0, \label{eq6}
\end{equation}
then $Y_n$ tends to be Benford in base $b$ and $(Y_n)$ is a.s. Benford in base $b$.
\end{proposition}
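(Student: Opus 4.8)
The plan is to show that $Y_n$ tends to be Benford in base $b$; since $(X_n)$ is stationary (being a fixed function of consecutive terms of the i.i.d.\ sequence $(Z_n)$), Corollary~\ref{coro1} will then give that $(Y_n)$ is a.s.\ Benford as well. By Lemma~\ref{lem1} it suffices to prove, for each fixed $h\in\Z^+$, that $\e\big(e_h(\log_b Y_n)\big)\to0$. I would write $f(z,z')=e_h(\log_b g(z,z'))$; then $|f|\equiv1$ and $e_h(\log_b Y_n)=\prod_{k=1}^n f(Z_k,Z_{k+1})$. Introduce the integral operator $Q$ on $L^2(P_{Z_1})$ given by $(Q\psi)(z)=\int f(z,z')\psi(z')\,dP_{Z_1}(z')$, and integrate out $Z_{n+1},Z_n,\dots,Z_2$ one after another in $\e\big(\prod_{k=1}^nf(Z_k,Z_{k+1})\big)$ --- each $Z_j$ enters only its (at most two) neighbouring factors, and the $Z_k$ are i.i.d.\ --- to obtain the telescoping identity $\e\big(e_h(\log_b Y_n)\big)=\int (Q^n\mathbf 1)\,dP_{Z_1}$, where $\mathbf 1\equiv1$.

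The central observation is that $Q^2$ is the integral operator with kernel $\varphi(z,z')=\e\big[e_h\big(\log_b(g(z,Z_2)g(Z_2,z'))\big)\big]$, which is precisely the function appearing in \eqref{eq6} (equivalently, conditioning $\prod_{k=1}^{2m}f(Z_k,Z_{k+1})$ on the odd-indexed $Z$'s produces $\prod_{j=1}^m\varphi(Z_{2j-1},Z_{2j+1})$). Now $Z_1$ and $Z_3$ are independent, so $P_{(Z_1,Z_3)}=P_{Z_1}\otimes P_{Z_1}$, and \eqref{eq6} says exactly that $|\varphi|<1$ on a set of positive $P_{Z_1}\otimes P_{Z_1}$-measure; since $|\varphi|\le1$ everywhere, this forces $\rho^2:=\iint|\varphi(z,z')|^2\,dP_{Z_1}(z)\,dP_{Z_1}(z')<1$. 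A single application of the Cauchy--Schwarz inequality to the inner integral then gives $\|Q^2\psi\|_{L^2}\le\rho\,\|\psi\|_{L^2}$, which combined with the trivial bound $\|Q\|_{L^2\to L^2}\le1$ (again because $|f|\equiv1$) yields $\|Q^n\mathbf 1\|_{L^2}\le\rho^{\lfloor n/2\rfloor}$. As $P_{Z_1}$ is a probability measure, $\big|\e(e_h(\log_b Y_n))\big|=\big|\int Q^n\mathbf 1\,dP_{Z_1}\big|\le\|Q^n\mathbf 1\|_{L^1}\le\|Q^n\mathbf 1\|_{L^2}\le\rho^{\lfloor n/2\rfloor}\to0$, which is what is needed.

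The telescoping identity and the $L^2$-contraction estimate are routine; the real content --- and the point I expect to need the most care --- is identifying the kernel of $Q^2$ with the expression in \eqref{eq6} and recognizing that hypothesis \eqref{eq6} is precisely what makes $Q^2$ a strict contraction on $L^2$. (For instance, when $f(z,z')=e^{i\theta(z)-i\theta(z')}$ one has $|\varphi|\equiv1$, \eqref{eq6} fails, and indeed $\e(e_h(\log_b Y_n))$ need not tend to $0$, so the hypothesis cannot be relaxed in this direction.) Beyond that, one only has to keep track of the parity of $n$ at the end of the estimate.
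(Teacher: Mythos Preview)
Your argument is correct. The operator identity $\e(e_h(\log_b Y_n))=\int Q^n\mathbf 1\,dP_{Z_1}$ follows exactly as you describe by successive conditioning, and the identification of the kernel of $Q^2$ with the function $\varphi$ in \eqref{eq6} is right; the Hilbert--Schmidt bound $\|Q^2\|_{L^2\to L^2}\le\rho$ with $\rho^2=\iint|\varphi|^2\,dP_{Z_1}\otimes dP_{Z_1}<1$ then gives the geometric decay.

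The paper takes a more hands-on route. It conditions on \emph{all} odd-indexed variables $V=(Z_1,Z_3,\dots,Z_{n+1})$ at once, obtaining a product $\prod_j\varphi(z_{2j-1},z_{2j+1})$ whose factors are still coupled through shared arguments; it then simply bounds every second factor by $1$ to decouple, ending with $\big(\int|\varphi|\,dP_{(Z_1,Z_3)}\big)^{\lfloor n/4\rfloor}$ and a separate lemma that this $L^1$-integral is strictly below $1$. Your transfer-operator formulation avoids the ``throw away half the factors'' step and yields the sharper exponent $\lfloor n/2\rfloor$ (with base $\rho\le(\int|\varphi|)^{1/2}$, so your bound is never worse). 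It is also the formulation that extends most cleanly to general $m$-dependence, since one just replaces $Q^2$ by $Q^{m+1}$.
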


\begin{proof}
Again we only need to prove that $\lim_n \e(e_h(\log_b Y_n))=0$ for all $h\neq 0$. Fix $h\neq 0$ and $n\geq1$ and suppose that $X_1,\dots,X_n$ satisfy \eqref{cond2}. To begin with, we suppose that $n=4m$ where $m$ is some positive integer. Set $V:=(Z_1,Z_3,\dots,Z_{4m+1})$, $v:=(z_1,z_3,\dots,z_{4m+1})$, $\varphi:=e_h(\log_b g)$ and 
\[
\psi(x,z,y):=e_h(\log_b (g(x,z)g(z,y)))=\varphi(x,z)\varphi(z,y)=\log_b(G(x,y,z)).
\] 
Since the $Z_n$ are i.i.d.,
\begin{align*}
\e(e_h(\log_b Y_n))=&\e[\varphi(Z_1,Z_2)\times \varphi(Z_2,Z_3)\times\dots\times \varphi(Z_{4m},Z_{4m+1})]\\
                   =&\int_{\reel^{2m+1}} \e[\varphi(Z_1,Z_2)\times \varphi(Z_2,Z_3)\times\dots\times \varphi(Z_{4m},Z_{4m+1})/V=v] dP_V(v)\\
                   =&\int_{\reel^{2m+1}} \e[\varphi(z_1,Z_2)\times \varphi(Z_2,z_3)\times\dots\times \varphi(Z_{4m},z_{4m+1})] dP_V(v)\\
                   =&\int_{\reel^{2m+1}} \e[\psi(z_1,Z_2,z_3)\times\dots\times \psi(z_{4m-1},Z_{4m},z_{4m+1})] dP_V(v)\\
                   =&\int_{\reel^{2m+1}} \e[\psi(z_1,Z_2,z_3)]\times \dots\times \e[\psi(z_{4m-1},Z_{4m},z_{4m+1})] dP_V(v).
\end{align*}
Besides, the modulus of the expectations appearing in the last integrand are bounded by $1$. Using the fact that the $Z_n$ are i.i.d., we deduce from Fubini's Theorem that 
\begin{align*}
|\,\e(e_h(\log_b Y_n))|\leq & \int_{\reel^{2m+1}} |\,\e[\psi(z_1,Z_2,z_3)]|\times |\,\e[\psi(z_5,Z_6,z_7)]|\times\dots\\
                    &\times |\,\e[\psi(z_{4m-3},Z_{4m-2},z_{4m-1})]| dP_{(Z_1,Z_3,\dots,Z_{4m+1})}(z_1,z_3,\dots,z_{4m+1})\\
                    \leq & \left(\int_{\reel^{2}} |\,\e[\psi(z_1,Z_2,z_3)]|dP_{(Z_1,Z_3)}(z_1,z_3)\right)^{m}.
\end{align*}

Suitably modified, the above calculations still yield
\[
|\,\e(e_h(\log_b Y_n))|\leq \left(\int_{\reel^{2}} |\,\e[\psi(z_1,Z_2,z_3)]|dP_{(Z_1,Z_3)}(z_1,z_3)\right)^{m}
\]
when $n=4m+k$ with $k\in\{1,2,3\}$. To complete our proof we now demonstrate that 
\[
\int_{\reel^{2}} |\,\e[\psi(z_1,Z_2,z_3)]|dP_{(Z_1,Z_3)}(z_1,z_3)<1
\]
when \eqref{eq6} is fulfilled. Set 
\[
A=\left\{(z_1,z_3) : \left|\e[\psi(z_1,Z_2,z_3)]\right|<1\right\}
\] 
and 
\[
A_a=\left\{(z_1,z_3) : \left|\e[\psi(z_1,Z_2,z_3)]\right|\leq a\right\}.
\] 
 
The sequence $(A_{1/n})$ is non-decreasing and $A=\bigcup_nA_{1/n}$. So 
\[
P_{(Z_1,Z_3)}(A)=\lim_{n\rightarrow+\infty} P_{(Z_1,Z_3)}(A_{1/n}).
\]
If $P_{(Z_1,Z_3)}(A)>0$, there exists therefore $a<1$ such that $A_a$ satisfies $P_{(Z_1,Z_3)}(A_a)>0$. Hence
\begin{align*}
\int_{\reel^{2}} |\,\e[\psi(z_1,Z_2,z_3)]|dP_{(Z_1,Z_3)}(z_1,z_3)
=&\int_{\reel^{2}\setminus A_a} |\,\e[\psi(z_1,Z_2,z_3)]|dP_{(Z_1,Z_3)}(z_1,z_3)\\
&+\int_{A_a} |\,\e[\psi(z_1,Z_2,z_3)]|dP_{(Z_1,Z_3)}(z_1,z_3)\\
\leq & P_{(Z_1,Z_3)}(\reel^2\setminus A_a)+aP_{(Z_1,Z_3)}(A_a)
\end{align*}
is strictly less than $1$.
\end{proof}

\begin{remark}
The r.v.'s $X_n$ satisfy Condition \eqref{eq6} in Proposition \ref{proppp} in particular when 
$$
G(x,y,Z_2):=g(x,Z_2)g(Z_2,y) 
$$ 
admits a density for $(x,y)$ in a set of positive $P_{(Z_1,Z_3)}$-measure. For example when $Z_2$ admits a density, $g$ is of class $C^1$ and, for $(x,y)$ in a set of positive $P_{(Z_1,Z_3)}$-measure, the set of zeros of $\frac{\partial G}{\partial z}(x,y,.)$ is finite.

\end{remark}

\subsection{When the $X_n$ are independent}

All the results of this section rely on the following proposition which is a direct consequence of Lemma \ref{lem1} (see also \cite{miller}).

\begin{proposition}\label{prop1} If the $X_n$ are independent, then $Y_n$ tends to be Benford in base $b$ if and only if
$$
\forall\, h\in\Z^+\,,\;\lim_{n\rightarrow +\infty}\prod_{k=1}^n\left|\e(e_h(\log_b X_k))\right|=0\,.
$$
Moreover, if $X_{n_0}$ is a Benford r.v. for some $n_0\geq 1$, then $Y_n$ is a Benford r.v. for all $n\geq n_0$.
\end{proposition}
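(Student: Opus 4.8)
The plan is to deduce Proposition~\ref{prop1} directly from Lemma~\ref{lem1} together with the independence of the $X_n$. First I would recall that, by Lemma~\ref{lem1}, $Y_n$ tends to be Benford in base $b$ if and only if $\lim_{n\to+\infty}\e(e_h(\log_b Y_n))=0$ for every $h\in\Z^+$. Since $\log_b Y_n=\sum_{k=1}^n\log_b X_k$ and the $X_n$ are independent, the multiplicativity of expectation under independence gives
\[
\e(e_h(\log_b Y_n))=\e\!\left(\prod_{k=1}^n e_h(\log_b X_k)\right)=\prod_{k=1}^n\e(e_h(\log_b X_k)),
\]
and hence $\left|\e(e_h(\log_b Y_n))\right|=\prod_{k=1}^n\left|\e(e_h(\log_b X_k))\right|$. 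Combining this with Lemma~\ref{lem1} yields immediately the stated equivalence: $Y_n$ tends to be Benford in base $b$ iff $\prod_{k=1}^n\left|\e(e_h(\log_b X_k))\right|\to 0$ for every $h\in\Z^+$.

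For the second assertion, suppose $X_{n_0}$ is a Benford r.v.\ for some $n_0\geq 1$. By Lemma~\ref{lem1} this means $\e(e_h(\log_b X_{n_0}))=0$ for every $h\in\Z^+$. Then for any $n\geq n_0$, the product factorization above contains the factor $\e(e_h(\log_b X_{n_0}))=0$, so $\e(e_h(\log_b Y_n))=0$ for every $h\in\Z^+$; applying the first characterization in Lemma~\ref{lem1} (the one for a single r.v.) shows that $Y_n$ is a Benford r.v.

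There is essentially no obstacle here: the proposition is a short, routine consequence of independence and Lemma~\ref{lem1}. The only point requiring a word of care is the factorization step — one should note that each $e_h(\log_b X_k)$ is a bounded complex r.v.\ (modulus~$1$), so the expectations are finite and the product formula for independent bounded r.v.'s applies without any integrability concerns. Everything else is a direct translation through Lemma~\ref{lem1}.
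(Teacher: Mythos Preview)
Your proof is correct and follows exactly the approach the paper indicates: the proposition is stated there as a direct consequence of Lemma~\ref{lem1} together with the multiplicativity of expectation under independence, and you have spelled out precisely that argument. There is nothing to add.
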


\subsubsection{A general criterion ensuring that $Y_n$ tends to be Benford}

The following proposition shows that $Y_n$ tends to be Benford in most cases when the $X_n$ are independent. We say that the sequence $(Z_n)$ satisfies 
\begin{itemize}
\item Condition $(C_1)$ if $(Z_n)$ does not admit any subsequence which converges in distribution to a r.v. supported by some set $\{b^{a+\frac{z}{h}} : z \;\mbox{integer}\}$ ($a\in[0,1[$, $h\in\Z^+$);

\item Condition $(C_2)$ if $({\cal M}(Z_n))$ does not admit any subsequence which converges in distribution to a r.v. supported by some set $\{b^{a+\frac{z}{h}} : z=0,1,\dots,h-1\}$ ($a\in[0,1[$, $h\in\Z^+$).
\end{itemize}
 
\begin{proposition}\label{prop4}   
If the $X_n$ are independent and $(X_n)$ possesses a tight subsequence satisfying Condition $(C_1)$, then $Y_n$ tends to be Benford.

The same is true if the $X_n$ are independent and $(X_n)$ possesses a subsequence satisfying Condition $(C_2)$.
\end{proposition}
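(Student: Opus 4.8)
The plan is to derive Proposition \ref{prop4} from Proposition \ref{prop1} by a contrapositive argument. Suppose the $X_n$ are independent but $Y_n$ does not tend to be Benford in base $b$. By Proposition \ref{prop1}, there exists some $h\in\Z^+$ such that $\prod_{k=1}^n\left|\e(e_h(\log_b X_k))\right|$ does not converge to $0$; since this is a non-increasing sequence of non-negative reals, this means it converges to some limit $c>0$, and in particular $\left|\e(e_h(\log_b X_k))\right|\to 1$ as $k\to+\infty$. The aim is to show this forces $(X_n)$ to fail Condition $(C_1)$ along every tight subsequence, and also to fail Condition $(C_2)$ along every subsequence — giving both contrapositives at once, or nearly so.

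For the first assertion, I would take an arbitrary tight subsequence $(X_{n_j})$. By Prokhorov's theorem, tightness gives a further subsequence $(X_{n_{j_i}})$ converging in distribution to some r.v. $Z$. Along this subsequence $\left|\e(e_h(\log_b X_{n_{j_i}}))\right|\to 1$; the function $x\mapsto e_h(\log_b x)$ is bounded and continuous on $(0,\infty)$, so weak convergence gives $\left|\e(e_h(\log_b X_{n_{j_i}}))\right|\to\left|\e(e_h(\log_b Z))\right|$, whence $\left|\e(e_h(\log_b Z))\right|=1$. A standard equality case for characteristic-function-type quantities (the modulus of $\e(e^{i\theta})$ equals $1$ exactly when $\theta$ is a.s. constant mod $2\pi$) then shows $h\log_b Z$ is a.s. congruent mod $1$ to some constant $ha$ with $a\in[0,1[$, i.e. $Z$ is supported by $\{b^{a+z/h}:z\in\Z\}$. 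Thus $(X_{n_j})$ has a subsequence converging in distribution to a r.v. of the forbidden type, so $(X_{n_j})$ does not satisfy $(C_1)$; since the tight subsequence was arbitrary, $(X_n)$ has no tight subsequence satisfying $(C_1)$, which is the contrapositive of the first statement.

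For the second assertion, the point is that tightness is automatic after passing to mantissae: $({\cal M}_b(X_n))$ takes values in the compact interval $[1,b]$ (or $[1,b[$), so every subsequence of $(X_n)$ already yields, via a further subsequence, convergence in distribution of the mantissae to some r.v. supported on $[1,b]$. So take an arbitrary subsequence $(X_{n_j})$, extract a sub-subsequence along which ${\cal M}_b(X_{n_{j_i}})$ converges in distribution to some $W$ on $[1,b]$. Using $e_h(\log_b x)=e_h(\log_b{\cal M}_b(x))$ and continuity of $e_h(\log_b\cdot)$ on $[1,b]$, the same computation gives $\left|\e(e_h(\log_b W))\right|=1$, hence $\log_b W$ is a.s. constant mod $1/h$, so $W$ is supported by $\{b^{a+z/h}:z=0,1,\dots,h-1\}$ for some $a\in[0,1[$. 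Therefore $(X_{n_j})$ does not satisfy $(C_2)$, and since the subsequence was arbitrary, $(X_n)$ has no subsequence satisfying $(C_2)$ — the contrapositive of the second statement.

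The main obstacle is largely bookkeeping: one must be careful that weak convergence of $X_{n_{j_i}}$ to $Z$ really does pass through the bounded continuous map $x\mapsto e_h(\log_b x)$ (it does, this being exactly the portmanteau characterization), and one must nail down precisely which forbidden set arises — for $(C_1)$ the full lattice $\{b^{a+z/h}:z\in\Z\}$, for $(C_2)$ its image $\{b^{a+z/h}:z=0,\dots,h-1\}$ under the mantissa, reconciling the indexing with the definitions given before the proposition. A minor subtlety worth a sentence: the limiting $Z$ in the $(C_1)$ case need not be tight a priori as a limit, but it is a genuine r.v. since it is a weak limit of probability measures along a tight family, so no mass escapes to infinity. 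Beyond that, the argument is a direct translation of Proposition \ref{prop1} through Prokhorov's theorem and the equality case in $|\e(e^{i\theta})|\le 1$, so I expect no real difficulty.
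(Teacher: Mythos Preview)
Your proposal is correct and follows essentially the same approach as the paper: both arguments combine Proposition~\ref{prop1} with Prokhorov/Helly to extract a weakly convergent sub-subsequence, pass the bounded continuous map $x\mapsto e_h(\log_b x)$ through the weak limit, and then invoke the equality/strict-inequality case in $|\e(e^{i\theta})|\le 1$. The only difference is organizational --- the paper argues directly (under $(C_1)$ the limit $Z$ has $|\e(e_h(\log_b Z))|<1$, so eventually $|\e(e_h(\log_b X_{n_j}))|\le 1-\varepsilon$ and the product vanishes), while you argue by contrapositive (if the product fails to vanish then $|\e(e_h(\log_b X_k))|\to 1$, forcing any subsequential limit onto a forbidden lattice) --- but the mathematical content is the same.
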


\begin{proof} 
Suppose that the subsequence $(X_{n_l})_l$ is tight. By Helly Selection Theorem, some subsequence $(X_{n_j})_j$ of $(X_{n_l})_l$ converges in distribution to some r.v. $Z$ and this leads to convergence in distribution of $(\log_b X_{n_j})_j$ to $\log_b Z$. Suppose now that $(X_{n_l})_l$ satisfies Condition $(C_1)$. Then $|\e(e_h(\log_b Z))|<1$ for all $h\in\Z^+$. Fix $h\in\Z^+$. Since
\[
\lim_{j\rightarrow +\infty}|\e(e_h(\log_b X_{n_j}))|=|\e(e_h(\log_b Z))|,
\]
there exists $\varepsilon>0$ and $j_0\geq 1$ such that $|\e(e_h(\log_b X_{n_j}))|\leq 1-\varepsilon$ for $j\geq j_0$.
This yields 
\[
\lim_{m\rightarrow +\infty}\prod_{j=1}^m\left|\e(e_h(\log_b X_{n_j}))\right|=0
\]
which implies
\[
\lim_{n\rightarrow +\infty}\prod_{k=1}^n\left|\e(e_h(\log_b X_k))\right|=0.
\]
Thus, when the $X_n$ are independent, Proposition \ref{prop1} completes the proof of the first assertion.

Consider again a subsequence $(X_{n_l})_l$ of $(X_{n})$. The sequence $({\cal M}(X_{n_l}))_l$ is tight by nature since it is uniformly bounded. Thus some subsequence $({\cal M}(X_{n_j}))_j$ of $({\cal M}(X_{n_l}))_l$ converges in distribution to some r.v. $Z$ with values in $[1,b[$ and this leads to the convergence in distribution of $(\{\log_b X_{n_j}\})_j$ to $\log_b Z$. If $(X_{n_l})_l$ satisfies Condition $(C_2)$, we can conclude with the same arguments as above.
\end{proof}

Notice that the previous proposition is not a consequence of Theorem \ref{th2}. Indeed, the proof of such a result mainly uses the fact that the $X_n$ are independent.

In what follows, we consider r.v.'s $X_n$ with densities. This will allow us to get explicit bounds for the Fourier coefficients of the r.v.'s $\log_b X_n$ and thus to make use of both Proposition \ref{prop1} and Theorem \ref{th2}. 

\subsubsection{A general bound on Fourier coefficients and applications}\label{bound}

Several bounds of the characteristic function are available in the literature, but we are only interested in Fourier coefficients which are easier to investigate. We give below a simple bound which is uniform in $h$. 

\begin{lemma} \label{lem3}
If a r.v. $Z$ admits a density supported by the interval $[0,1[$ and bounded from above by a real $a$, then $|\e(e_h(Z))|\leq \sqrt{1-1/4a^2}$ for all $h\in\Z^+$.
\end{lemma}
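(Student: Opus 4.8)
The plan is to bound $|\e(e_h(Z))|^2$ by something of the form $1 - c$ with $c \geq 1/(4a^2)$, by exploiting that the density $f$ of $Z$ is squeezed between $0$ and $a$ on $[0,1[$. Write $|\e(e_h(Z))|^2 = |\widehat{f}(h)|^2$ where $\widehat{f}(h) = \int_0^1 f(t) e_h(t)\,dt$. The key identity is that $|\widehat f(h)|^2 = \widehat f(h)\overline{\widehat f(h)} = \int_0^1\int_0^1 f(s)f(t) e_h(s-t)\,ds\,dt$, and, since $f$ is a probability density, $1 - |\widehat f(h)|^2 = \int_0^1\int_0^1 f(s)f(t)\bigl(1 - e_h(s-t)\bigr)\,ds\,dt$. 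Taking real parts, $1 - |\widehat f(h)|^2 = \int_0^1\int_0^1 f(s)f(t)\bigl(1 - \cos(2\pi h(s-t))\bigr)\,ds\,dt = 2\int_0^1\int_0^1 f(s)f(t)\sin^2(\pi h(s-t))\,ds\,dt$, which is manifestly non-negative.

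Next I would lower-bound this double integral. The cleanest route is Parseval: expanding $f$ in its Fourier series $f = \sum_k \widehat f(k) e_k$, one has $\sum_k |\widehat f(k)|^2 = \int_0^1 f(t)^2\,dt \leq a\int_0^1 f(t)\,dt = a$, since $0 \leq f \leq a$. The $k=0$ term is $|\widehat f(0)|^2 = 1$, so $\sum_{k\neq 0}|\widehat f(k)|^2 \leq a - 1$. In particular $|\widehat f(h)|^2 \leq a-1$ for every $h \in \Z^+$. Thus $|\e(e_h(Z))| \leq \sqrt{a-1}$. This is actually a cleaner bound, but it is only useful when $a < 2$; to recover the stated uniform bound $\sqrt{1 - 1/(4a^2)}$ valid for all $a \geq 1$, I would instead argue directly on $1 - |\widehat f(h)|^2 = 2\int\!\!\int f(s)f(t)\sin^2(\pi h(s-t))\,ds\,dt$.

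To bound that below, fix $h$ and split $[0,1[$ into the $h$ intervals $I_j = [j/h,(j+1)/h[$, $j=0,\dots,h-1$. On such a scale $\sin^2(\pi h(s-t))$ depends only on the positions of $s,t$ within their respective subintervals modulo $1/h$; more precisely, after the change of variables this reduces to the case $h=1$ applied to the "folded" density $g(u) = \sum_{j=0}^{h-1} f(u + j/h)$ on $[0,1/h[$ rescaled to $[0,1[$, which still satisfies $\int g = 1$ and $g \leq ha$ — but that worsens the constant, so instead I keep $h$ and note $1-|\widehat f(h)|^2 \geq 2\inf_{s}\int_0^1 f(t)\sin^2(\pi h(s-t))\,dt \cdot \bigl(\text{no—}\bigr)$; the honest elementary estimate is: pick the point $c$ (a "median-like" point) with $\int_{\{t:\,\|t-c\|\geq 1/(4h)\}} f(t)\,dt \geq 1/2$ where $\|\cdot\|$ is distance to $\tfrac1h\Z$ — possible because the set $\{t : \|t-c\| < 1/(4h)\}$ has Lebesgue measure $1/(2h)$ for suitable $c$, hence $f$-measure at most $a/(2h) \cdot h \cdot$... . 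I would make this rigorous by choosing $s$ uniformly, averaging, and using $\int_0^1\int_0^1 f(s)f(t)\sin^2(\pi h(s-t))\,ds\,dt \geq \int_0^1\bigl(\int_0^1 f(s)\,ds\bigr)\cdot\bigl(\text{average of }\sin^2\bigr)\cdot$ a localization bound coming from $f\le a$.

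The main obstacle is precisely this last localization step: turning "$f$ is bounded by $a$, so it cannot concentrate near the zero set of $\sin^2(\pi h(\cdot))$" into the clean constant $1/(4a^2)$. I expect the decisive inequality to be: for any point $c$, $\int_0^1 f(t)\sin^2(\pi h(t-c))\,dt \geq \tfrac12 - \tfrac12\int_0^1 f(t)\cos(2\pi h(t-c))\,dt \geq \tfrac12 - \tfrac12|\widehat f(h)|$, and combining $1 - |\widehat f(h)|^2 \geq$ twice this after integrating against $f(c)\,dc$ gives $1 - |\widehat f(h)|^2 \geq 1 - |\widehat f(h)|$, which is trivial. So the cosine-average argument alone is not enough, and the boundedness $f\leq a$ must enter through the measure of the near-zero set: with $x = |\widehat f(h)|$, one shows $x \leq 2a\sqrt{1-x^2}\cdot(\text{const})$, i.e. $x^2 \leq 4a^2(1-x^2)$, hence $x^2(1+4a^2)\leq 4a^2$ and $x^2 \leq 1 - 1/(1+4a^2) \leq 1 - 1/(4a^2)$ after a final crude step — wait, this needs $1/(1+4a^2)\geq 1/(4a^2)$, which is false; so the correct bookkeeping yields $x \leq \sqrt{1 - 1/(4a^2)}$ directly from an inequality of the shape $1 - x^2 \geq 1/(4a^2)$, and I would obtain $1-x^2 \geq 1/(4a^2)$ by estimating $1-x^2 = 2\int\!\!\int f(s)f(t)\sin^2(\pi h(s-t))\,ds\,dt$ from below using that the set where $\sin^2(\pi h(s-t)) < 1/2$ is, for each fixed $s$, a union of intervals of total length $1/(2h)$, hence carries $f$-mass at most $a/(2h)\cdot$... . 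Pinning down that this forces $1-x^2 \geq 1/(4a^2)$ is the crux, and I would do it by a direct optimization: among densities with $0\leq f\leq a$, $\int f=1$, the one minimizing $\int\!\!\int f(s)f(t)\sin^2(\pi h(s-t))\,ds\,dt$ is (by convexity/extreme-point considerations) $f\equiv a$ on a translate of the set where $\sin^2$ is smallest, and an explicit computation on that extremizer produces the constant $1/(4a^2)$.
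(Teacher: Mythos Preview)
Your proposal is a string of exploratory attempts, most of which you abandon yourself: the Parseval bound $|\widehat f(h)|^2\le a-1$ is useless for $a\ge 2$; the folding, median, and cosine-average ideas are dropped; the algebraic step comparing $1/(1+4a^2)$ with $1/(4a^2)$ goes the wrong way, as you note. The final paragraph has the right strategy --- maximize $|\widehat f(h)|^2$ over densities with $0\le f\le a$ and $\int f=1$ by passing to the extreme points $f=a\indic_E$, $|E|=1/a$ --- but it is not a proof. You do not say why the maximum of this \emph{quadratic} functional lies at an extreme point (the missing observation is that $f\mapsto |\widehat f(h)|^2$ is convex, being a sum of squares of real linear functionals of $f$), you do not identify the optimal $E$, and you never carry out the ``explicit computation on that extremizer''. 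If you had, you would obtain $|\widehat f(h)|\le \frac{a}{\pi}\sin\frac{\pi}{a}$, and then $\sin x\le x-x^3/\pi^2$ on $[0,\pi]$ gives $|\widehat f(h)|\le 1-1/a^2$, which comfortably implies the stated bound.

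The paper's argument is different and shorter because it linearizes the problem. Writing $|\e(e_h(Z))|^2=\e(e_h(Z_1-Z_2))=\int_{-1}^{1} g(x)\cos(2\pi hx)\,dx$, with $Z_1,Z_2$ independent copies of $Z$ and $g$ the density of $Z_1-Z_2$, the key step is that $g$ is again bounded by $a$ (it is the convolution of $f$ with its reflection, so $g\le\|f\|_\infty\|f\|_1=a$). One now maximizes the \emph{linear} functional $g\mapsto\int g\cos(2\pi h\cdot)$ over densities on $[-1,1]$ bounded by~$a$: the maximizer is immediately $g=a\indic_{B_L}$ with $B_L=\{x:\cos(2\pi hx)\ge L\}$ of Lebesgue measure $1/a$, and a direct calculation gives $\frac{2a}{\pi}\sin\frac{\pi}{2a}\le 1-\frac{1}{4a^2}$. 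Your double-integral identity $1-|\widehat f(h)|^2=2\iint f(s)f(t)\sin^2(\pi h(s-t))\,ds\,dt$ is this same quantity before integrating out one variable; the observation $g\le a$ is precisely what turns your quadratic optimization into a one-line rearrangement.
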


\begin{proof}
Let $Z$ be such a r.v and let $h\in\Z^+$ be fixed. If $Z_1$ and $Z_2$ denote two independent r.v's with the same density as $Z$, then
\[
0\leq |\,\e(e_h(Z))|^2=\,\e(e_h(Z_1-Z_2))=\int_{[-1;1]}f(x)\cos(2h\pi x)\,dx
\]
where $f$ is the density of $Z_1-Z_2$. Note that $f\leq a$ too and that $a\geq 1$. For every $l\in[-1,1]$ set
\[
B_l=\{x\in[-1;1] : \cos(2h\pi x)\geq l\}.
\]
Let $L$ be such that the Lebesgue measure of $B_L$ is $1/a$. Then
\[
\int_{B_L^c}f(x)\,dx=\int_{B_L}(a-f(x))\,dx
\]
Hence, if we set $g=a\indic_{B_L}$,
\begin{align*}
\int_{[-1;1]}f(x)\cos(2h\pi x)\,dx
& =\int_{B_L}f(x)\cos(2h\pi x)\,dx+\int_{B_L^c}f(x)\cos(2h\pi x)\,dx\\
& \leq \int_{B_L}f(x)\cos(2h\pi x)\,dx+L\int_{B_L^c}f(x)\,dx\\
& = \int_{B_L}f(x)\cos(2h\pi x)\,dx+L\int_{B_L}(a-f(x))\,dx\\
& \leq \int_{B_L}f(x)\cos(2h\pi x)\,dx+\int_{B_L}(a-f(x))\cos(2h\pi x)\,dx\\
& =\int_{[-1;1]}g(x)\cos(2h\pi x)\,dx.
\end{align*}
Direct calculations and the inequality $\sin x\leq x-x^3/\pi^2\,\,$ ($0\leq x\leq \pi$) give
\[
\int_{[-1;1]}g(x)\cos(2h\pi x)\,dx=\frac{2a}{\pi}\sin \frac{\pi}{2a}\leq 1-\frac{1}{4a^2}\,.
\]
The proof is completed.
\end{proof}

Proposition \ref{propppp} below is an application of Lemma \ref{lem3} in situations where ${\cal M}(X_n)$ admits a density. Its proof uses the following lemma whose proof is elementary.

\begin{lemma}\label{robbins}
Let $x_1,\dots,x_N$ be real numbers. Then
\[ \left|\frac{1}{N}\sum_{n=1}^Ne^{i(x_1+\dots +x_n)}\right|^2= \frac{1}{N}+\frac{2}{N^2}\sum_{1\leq k<n\leq N}\operatorname{Re}\left(e^{i\left(x_{k+1}+\dots+x_{n}\right)}\right). 
\]
\end{lemma}

\begin{proposition}\label{propppp}
Suppose that each r.v. ${\cal M}(X_n)$ admits a bounded density $f_n$. Set $c_n=\sup_{1\leq x<b}f_n(x)$ and $C_N=\max_{1\leq n\leq N}c_n$ ($n\geq 1$, $N\geq 1$). Then $Y_n$ tends to be Benford if $\sum(1/c_n^2)=+\infty$ and $(Y_n)$ is a.s. Benford if $\sum (C_N^2/N^2)<+\infty$. This is in particular the case  when the densities $f_n$ are uniformly bounded.
\end{proposition}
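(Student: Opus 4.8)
The plan is to reduce both assertions to the uniform‑in‑$h$ Fourier bound of Lemma \ref{lem3} and then to invoke Proposition \ref{prop1} for the first assertion and Theorem \ref{th2} for the second. First I would pass from $X_n$ to $W_n:=\{\log_b X_n\}=\log_b{\cal M}(X_n)$, which takes values in $[0,1[$ and, since $e_h$ is $1$‑periodic, satisfies $e_h(\log_b X_n)=e_h(W_n)$. A change of variables $y=b^w$ shows that $W_n$ has density $w\mapsto(\ln b)\,b^w f_n(b^w)$ on $[0,1[$, hence a density bounded above by $\gamma_n:=(\ln b)\,b\,c_n$. Lemma \ref{lem3} then gives, for every $h\in\Z^+$,
\[
|\e(e_h(\log_b X_n))|=|\e(e_h(W_n))|\le\sqrt{1-\tfrac{1}{4\gamma_n^2}}<1,
\]
the strictness coming from $\gamma_n\ge1$ (the supremum of a probability density on an interval of length $1$). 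Since $t\mapsto\sqrt{1-1/(4t^2)}$ is non‑decreasing and $c_n\le C_N$ for $n\le N$, this quantity is also bounded by $\beta_N:=\sqrt{1-1/(4(\ln b)^2b^2C_N^2)}<1$ whenever $n\le N$.

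For the first assertion I would use Proposition \ref{prop1}: as the $X_n$ are independent, it suffices to show $\prod_{k=1}^n|\e(e_h(\log_b X_k))|\to0$ for each fixed $h$. Using $\log(1-x)\le-x$,
\[
\log\prod_{k=1}^n|\e(e_h(\log_b X_k))|\le-\frac12\sum_{k=1}^n\frac{1}{4\gamma_k^2}=-\frac{1}{8(\ln b)^2b^2}\sum_{k=1}^n\frac{1}{c_k^2}\longrightarrow-\infty
\]
when $\sum 1/c_k^2=+\infty$, so the product tends to $0$ and $Y_n$ tends to be Benford.

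For the second assertion I would apply Theorem \ref{th2} with $p=2$. Applying Lemma \ref{robbins} with $x_k=2\pi h\log_b X_k$, taking expectations, using the independence of the $X_k$ and $|\operatorname{Re}(z)|\le|z|$, and then $n\le N\Rightarrow\prod_{j=k+1}^n|\e(e_h(\log_b X_j))|\le\beta_N^{\,n-k}$,
\[
\e|T_N^{(h)}|^2=\frac1N+\frac{2}{N^2}\sum_{1\le k<n\le N}\operatorname{Re}\Big(\prod_{j=k+1}^n\e(e_h(\log_b X_j))\Big)\le\frac1N+\frac{2}{N^2}\sum_{1\le k<n\le N}\beta_N^{\,n-k}.
\]
The double sum is at most $N\sum_{d\ge1}\beta_N^d=N\beta_N/(1-\beta_N)$, and $1/(1-\beta_N)\le 2/(1-\beta_N^2)=8(\ln b)^2b^2C_N^2$, so $\e|T_N^{(h)}|^2\le(1+16(\ln b)^2b^2C_N^2)/N$ and
\[
\sum_{N\ge1}\frac{\e|T_N^{(h)}|^2}{N}\le\sum_{N\ge1}\frac1{N^2}+16(\ln b)^2b^2\sum_{N\ge1}\frac{C_N^2}{N^2}<+\infty
\]
when $\sum C_N^2/N^2<+\infty$; Theorem \ref{th2} then yields that $(Y_n)$ is a.s. Benford. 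Finally, if the $f_n$ are uniformly bounded by $c$, then $c_n\le c$ and $C_N\le c$, whence $\sum1/c_n^2\ge\sum c^{-2}=+\infty$ and $\sum C_N^2/N^2\le c^2\sum N^{-2}<+\infty$, and both conclusions apply.

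The routine parts are the change of variables and the two invocations of Lemma \ref{lem3} and Proposition \ref{prop1}; the only step needing care is the bookkeeping in the second assertion, where one must verify that summing the geometric series produces a per‑index factor $1/(1-\beta_N)$ of order exactly $C_N^2$, so that $\sum_N\e|T_N^{(h)}|^2/N$ is governed precisely by the hypothesis $\sum C_N^2/N^2<+\infty$ (and that $\beta_N<1$ throughout, which is guaranteed by $\gamma_n\ge1$).
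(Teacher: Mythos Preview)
Your proof is correct and follows essentially the same route as the paper's: bound the density of $\{\log_b X_n\}$ by $(b\ln b)c_n$, apply Lemma~\ref{lem3}, then use Proposition~\ref{prop1} for the first assertion and Theorem~\ref{th2} with $p=2$ via Lemma~\ref{robbins} and a geometric series for the second. Your use of $\log(1-x)\le -x$ in the first part is a mild simplification over the paper's case split on whether $c_k\to\infty$, and your explicit tracking of constants in the second part is slightly cleaner, but the arguments are otherwise the same.
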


\begin{proof}
Under the above assumptions, the density of $\{\log_b X_n\}$ is bounded by $(b\log b)c_n$. Hence by Lemma \ref{lem3} 
\[
\prod_{k=1}^n\left|\e(e_h(\log_b X_k))\right|\leq \sqrt{\prod_{k=1}^n\left(1-\frac{1}{4(b\log b)^2c_k^2}\right)}
\]
and so 
\[
\log \left(\prod_{k=1}^n\left|\e(e_h(\log_b X_k))\right|\right)\leq \frac{1}{2}\sum_{k=1}^n\log\left(1-\frac{1}{4(b\log b)^2c_k^2}\right).
\]

If $c_k$ does not tend to infinity as $k\rightarrow +\infty$, then a subsequence of $(c_k)$ is bounded and so $\sum \log\left(1-\frac{1}{4(b\log b)^2c_k^2}\right)=-\infty$.
If $\lim_kc_k=+\infty$, then $\log\left(1-\frac{1}{4(b\log b)^2c_k^2}\right)$ is equivalent to $-\frac{1}{4(b\log b)^2c_k^2}$ as $k\rightarrow +\infty$. Thus, in this case, $\sum \log\left(1-\frac{1}{4(b\log b)^2c_k^2}\right)=-\infty$ if and only if $\sum (1/c_k^2)=+\infty$. The proof of the first assertion is completed.

To prove the second assertion, we will make use of Theorem \ref{th2} with $p=2$. Set $T_N=(1/N)\sum_{n=1}^N e_h(\log_b X_n)$ and $A_N=\max_{1\leq n\leq N}|\e(e_h(\log_b X_n))|$ ($N\geq 1$). By Lemma \ref{robbins}, for every $N\geq 1$,

\begin{align*}
\e|T_N|^2 & \leq  \frac{1}{N}+\frac{2}{N^2}\sum_{1\leq k<n\leq N}|\e(e_h(\log_b X_{k+1}))|\dots|\e(e_h(\log_b X_n))|\\
          & \leq \frac{1}{N}+\frac{2}{N^2}\sum_{1\leq k<n\leq N} A_N^{n-k}\\
          & \leq \frac{1}{N}+\frac{2}{N^2}\left((N-1)A_N+(N-2)A_N^2+\dots+A_N^{N-1}\right)\\
          & \leq \frac{1}{N}+\frac{2(N-1)}{N^2(1-A_N)}.
\end{align*}
Hence Theorem \ref{th2} yields that $(Y_n)$ is a.s. Benford when $\sum (1/N^2(1-A_N))<+\infty$. Besides, from Lemma \ref{lem3}, we have $A_N\leq \sqrt{1-1/4(b\log b)^2C_N^2}$. If the non-decreasing sequence $(C_N)$ is bounded, then both $\sum (1/N^2(1-A_N))$ and $\sum (C_N^2/N^2)$ converge. If not, $1/(1-A_N)$ is bounded by $KC_N^2$ where $K$ is a constant. This concludes the proof of Proposition \ref{propppp}. 

\end{proof}

\begin{remark}

There is at least one alternative way to prevent $|\e(e_h(\log_b X_n))|$ from being too close to $1$: limiting the density of ${\cal M}(X_n)$ from below. A mild adaptation of the arguments used in the proof of Lemma \ref{lem3} and of Proposition \ref{propppp} yields that if we replace $c_n=\sup_{1\leq x<b}f_n(x)$ in Proposition \ref{propppp} by $c^{\prime}_n=\min_{1\leq x<b}f_n(x)$, then $Y_n$ tends to be Benford when $\sum c^{\prime}_n=+\infty$. In particular, we also can deduce conditions ensuring that $(Y_n)$ is a.s. Benford.
\end{remark}

\subsubsection{When the r.v. $X_n$ or $\log_b X_n$ is unimodal}

In the next corollary, which is a consequence of Proposition \ref{propppp}, we get a bound for the density of ${\cal M}(X_n)$ by assuming that $\log_b X_n$ or the positive r.v. $X_n$ itself admits a unimodal density (that is to say, a density with a single local maximum). In the event that $\log_b X_n$ admits a unimodal density, we can choose the law of $X_n$ among all the log-stable distributions, and many others since we do not impose the symmetry of the densities. The case where $X_n$ itself admits a unimodal density concerns many usual distributions supported by $]0,+\infty[$: exponential, Fisher-Snedecor, gamma, chi-squared, beta (some of them), Weibull, and so on. Note that Corollary \ref{coro3} does not require any hypothesis on the value or the existence of moments.

\begin{coro}\label{coro3}
Suppose that each r.v. $\log_bX_n$ or each $X_n$ admits a unimodal density $g_n$ and set $d_n=\sup_xg_n(x)$ and $D_N=\max_{1\leq n\leq N}d_n$. Then $Y_n$ tends to be Benford when $\sum(1/d_n^2)=+\infty$ and $(Y_n)$ is a.s. Benford when $\sum (D_N^2/N^2)<+\infty$. This is the case in particular when the densities $g_n$ are uniformly bounded.
\end{coro}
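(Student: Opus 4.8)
The plan is to deduce the corollary from Proposition \ref{propppp}. For that it suffices to produce, in each of the two cases, a density $f_n$ of ${\cal M}_b(X_n)$ whose supremum $c_n$ over $[1,b[$ obeys a bound of the shape $c_n\le K(d_n+1)$ for a constant $K=K(b)$. Indeed, if this holds then $1/c_n^2\ge 1/(K^2(d_n+1)^2)$, and $\sum 1/(d_n+1)^2=+\infty$ whenever $\sum 1/d_n^2=+\infty$ (if infinitely many $d_n\le 1$ the terms $1/(d_n+1)^2$ are bounded below by $1/4$; otherwise $(d_n+1)^2\le 4d_n^2$ for all but finitely many $n$), so $\sum 1/c_n^2=+\infty$ and Proposition \ref{propppp} gives that $Y_n$ tends to be Benford. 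Likewise $C_N=\max_{n\le N}c_n\le K(D_N+1)$ and $C_N^2\le 2K^2(D_N^2+1)$, so $\sum C_N^2/N^2<+\infty$ follows from $\sum D_N^2/N^2<+\infty$, and Proposition \ref{propppp} then yields that $(Y_n)$ is a.s. Benford. Thus everything reduces to bounding the density of ${\cal M}_b(X_n)$ — equivalently, since ${\cal M}_b(X_n)=b^{\{\log_b X_n\}}$ and $t\mapsto b^t$ has derivative at least $\log b$ on $[0,1[$, the density of $\{\log_b X_n\}$ — by a constant multiple of $d_n+1$.

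Suppose first that $\log_b X_n$ admits a unimodal density $g_n$, with mode $\mu$. The density of $\{\log_b X_n\}$ on $[0,1[$ is $\psi_n(t)=\sum_{k\in\Z}g_n(t+k)$, and I would estimate it by a "fold and dominate by the integral" argument: picking the integer $k_0$ with $t+k_0\le\mu<t+k_0+1$, the function $g_n$ is non-decreasing on $[t+k,t+k+1]$ for $k\le k_0-1$, giving $g_n(t+k)\le\int_{t+k}^{t+k+1}g_n$, and non-increasing on $[t+k-1,t+k]$ for $k\ge k_0+2$, giving $g_n(t+k)\le\int_{t+k-1}^{t+k}g_n$, while the two remaining terms $g_n(t+k_0)$ and $g_n(t+k_0+1)$ are each at most $d_n$. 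The integral contributions add up to at most $\int_{\reel}g_n=1$ (the ranges $]-\infty,t+k_0]$ and $[t+k_0+1,+\infty[$ being disjoint), whence $\psi_n(t)\le 2d_n+1$ for every $t$. Consequently $c_n\le(2d_n+1)/\log b$ and $C_N\le(2D_N+1)/\log b$, which is of the required form, and the first paragraph closes this case.

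Now suppose that $X_n$ itself admits a unimodal density $g_n$, with mode $M$. Here the density of ${\cal M}_b(X_n)$ at $x\in[1,b[$ is $f_n(x)=\sum_{k\in\Z}g_n(xb^k)b^k$, and I would run the same argument on the multiplicative scale: choosing $k_0$ with $xb^{k_0}\le M<xb^{k_0+1}$, monotonicity of $g_n$ on the relevant subintervals together with $xb^{k+1}-xb^k=xb^k(b-1)$ and $x\ge 1$ yields $g_n(xb^k)b^k\le\frac1{b-1}\int_{xb^k}^{xb^{k+1}}g_n$ for $k\le k_0-1$ and $g_n(xb^k)b^k\le\frac b{b-1}\int_{xb^{k-1}}^{xb^k}g_n$ for $k\ge k_0+2$, and these telescope to at most $\frac{1+b}{b-1}$. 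This leaves the indices $k_0$ and $k_0+1$, for which the only bound at hand is $g_n(xb^k)b^k\le d_nb^k\le d_nM$ (using $b^{k_0}\le M/x\le M$), so altogether $f_n(x)\le\frac{1+b}{b-1}+(1+b)d_nM$. When the modes of the laws of the $X_n$ stay in a bounded set — in particular when the $X_n$ are identically distributed, which already covers the exponential, gamma, chi-squared, Fisher--Snedecor and Weibull examples — this reads $c_n\le K(d_n+1)$ and we conclude via the first paragraph. The contribution of the two indices sitting on either side of the mode, which carries the factor $M$, is exactly the step I expect to be delicate: controlling it in general (without a boundedness assumption on the modes, or some sharper local estimate there) is where the real difficulty of the statement lies.
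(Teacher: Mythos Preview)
For the first alternative (each $\log_b X_n$ unimodal) your argument is correct and essentially the same as the paper's: both obtain the bound $\psi_n(t)\le 2d_n+1$ for the density $\psi_n$ of $\{\log_b X_n\}$, you by dominating all but the two lattice values nearest the mode by integrals of $g_n$, the paper by first translating so that the mode is an integer, then sandwiching $\psi_n(t)$ between $\psi_n(0)\pm d_n$ and using $\int_0^1\psi_n=1$. The bookkeeping differs, the constant and the conclusion are identical.

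For the second alternative (each $X_n$ unimodal) your hesitation is justified, and the obstacle you isolate is not a weakness of your method but of the statement itself. The paper disposes of this case in one sentence (``we conclude in the same spirit as above'') after writing the density of ${\cal M}_b(X_n)$ as $\sum_m g_n(b^m+x)$; the correct expression is $\sum_m b^m g_n(b^m x)$, and once the weights $b^m$ are restored the ``same spirit'' argument breaks down precisely at the two indices straddling the mode, for the reason you give. In fact no bound on $c_n$ in terms of $d_n$ alone can exist: take the $X_n$ independent with $X_n$ uniform on $[n-\tfrac12,\,n+\tfrac12]$, so each $g_n$ is unimodal with $d_n=1$ and $\sum 1/d_n^2=+\infty$. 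Then $\log_b X_n$ is supported in an interval of length $O(1/n)$, and a short computation gives
\[
\left|\e\!\left(e_h(\log_b X_n)\right)\right|
=\left|\,n\!\int_{-1/(2n)}^{1/(2n)}e_h(\log_b(1+u))\,du\,\right|
=1-\frac{c_h}{n^2}+O\!\left(\frac1{n^3}\right)
\]
for some $c_h>0$. Hence $\prod_{k}|\e(e_h(\log_b X_k))|$ converges to a positive limit and, by Proposition~\ref{prop1}, $Y_n$ does \emph{not} tend to be Benford. So the second alternative is false without an additional hypothesis; your proposed fix---assuming the modes of the $X_n$ stay bounded---is exactly what is needed to make your bound $c_n\le K(d_n+1)$ valid and to rescue the conclusion.
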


\begin{proof}
Fix $n\geq 1$ and $h\in\Z^+$. Assume that $\log_b X_n$ admits a unimodal density $g_n$ bounded above by $d_n$. Since we only deal with $\left|\e(e_h(\log_b X_n))\right|$ and since we have $\left|\e(e_h(a+\log_b X_n))\right|=\left|\e(e_h(\log_b X_n))\right|$ ($a\in\reel$), we can assume, without loss of generality, that the mode of $\log_b X_n$ is an integer $k_0$. Fix $x\in[0,1[$. The r.v. $\{\log_b X_n\}$ admits a density $g_n^*$ given by
\[
g_n^*(x)=\sum_{-\infty}^{+\infty}g_n(m+x).
\]
Moreover 
\[
g_n(m)\leq g_n(m+x)\leq g_n(m+1) \;\;\;\mbox{ if }m<k_0
\]
and 
\[
g_n(m+1)\leq g_n(m+x)\leq g_n(m) \;\;\;\mbox{ if }m\geq k_0.
\]
Hence
\[
\sum_{-\infty}^{k_0-1}g_n(m)+\sum_{k_0}^{+\infty}g_n(m+1)\leq g_n^*(x)\leq \sum_{-\infty}^{k_0}g_n(m)+\sum_{k_0}^{+\infty}g_n(m)
\]
which yields 
\[
g_n^*(0)-d_n\leq g_n^*(x)\leq g_n^*(0)+d_n.
\]
Integrating over $[0,1[$ the three members of the above formula gives
\[
g_n^*(0)-d_n\leq 1\leq g_n^*(0)+d_n.
\]
Thus $g_n^*(x)\leq 1+2d_n$ which implies that ${\cal M}(X_n)$ admits a density bounded by $(1+2d_n)/\log b$. By Proposition \ref{propppp}, $Y_n$ tends to be Benford when $\sum(1/(1+2d_n)^2)=+\infty$ and $(Y_n)$ is a.s. Benford when $\sum ((1+2D_N^2)/N^2)<+\infty$. This is the case when $\sum(1/d_n^2)=+\infty$ and $\sum (D_N^2/N^2)<+\infty$ respectively, whether $\lim_nd_n=+\infty$ or not.

Assume now that $X_n$ itself admits a unimodal density $g_n$ bounded above by $d_n$. We will assume, without loss of generality, that the mode of $X_n$ is $b^{k_0}$ where $k_0$ is an integer. Fix $x\in[1,b[$. The r.v. ${\cal M}(X_n)$ admits a density $g^*$ satisfying
\[
g_n^*(x)=\sum_{-\infty}^{+\infty}g_n(b^m+x) 
\]
and we conclude in the same spirit as above. 
\end{proof}

Note that the r.v.'s involved in Corollary \ref{coro3} are allowed to converge in distribution to a r.v. supported by some set $\{b^{a+\frac{z}{h}} : z \;\mbox{integer}\}$ ($a\in[0,1[$, $h\in\Z^+$). Hence Corollary \ref{coro3} is not a consequence of Proposition \ref{prop4}.

\section{Appendix} \label{app}

We present here a survey of the main known results on Benford r.v.'s and of results which may be new but are easily deduced from known techniques. All the proofs below use Fourier Analysis and most of them are simpler and shorter than the original ones. See \cite{berger2} for more basic facts on Benford's law.

With some modifications, most of the random variables are close to be Benford  in a sense which will be specified (see e.g. \cite{dumbgen,leemis}). Indeed, if $Z$ is a random variable such that $\lim_{t\rightarrow \infty}\e(\exp(2i\pi tZ))=0$ (this holds in particular when the law of $Z$ is absolutely continuous), then $\lim_{\sigma\rightarrow \infty}\e(e_h(\sigma Z))=0$ for every $h\in\Z^+$. The r.v. $X:=b^{\sigma Z}$ is close to be Benford for sufficiently large $\sigma$ in the sense that $X$ converges in distribution to the Benford's law as $\sigma$ goes to infinity.  This is in particular the case when $X=e^Z$, where $Z$ is an exponential or a Weibull r.v. with a sufficiently small scale parameter. Besides, $Z$ itself is close to be Benford in any base in the particular case where $Z$ is a log-normal or log-Cauchy r.v. provided that the dispersion parameter of the associated normal or Cauchy distribution is sufficiently large (see also Section \ref{known}).

\subsection{Scale-invariance}\label{scale}
The scale-invariance property of the law of the mantissa of a random variable is intrinsic to $\mu_b$.  Historically, it is for this reason that $\mu_b$ has been chosen to depict the First Digit Phenomenon. This property is equivalent to the invariance by translation of the Lebesgue measure on the circle or, what is the same, to the invariance of $\mu_b$ by product modulo $b$.

The following property has been stated, sometimes in a less precise form, by several authors and is proved, as stated below, by Hill \cite{hill2} via techniques involving the $\sigma$-algebra generated by the mantissa function. We give a short and original proof using Fourier analysis.

\begin{proposition}
Let $X$ be a positive random variable. The three following conditions are equivalent :

\begin{enumerate}

\item $X$ is Benford in base $b$;

\item for every $\lambda>0$, $P_{{\cal M}_b(X)}=P_{{\cal M}_b(\lambda X)}$;

\item for some $\lambda>0$ different from any root of $b$, $P_{{\cal M}_b(X)}=P_{{\cal M}_b(\lambda X)}$. 
\end{enumerate}
\end{proposition}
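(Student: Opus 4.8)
The plan is to translate everything into Fourier coefficients on the torus $\reel/\Z$ via the substitution $x\mapsto\{\log_b x\}$, exactly as in Lemma \ref{lem1}. Writing $W=\{\log_b X\}$ and $t=\log_b\lambda$, the key observation is that ${\cal M}_b(\lambda X)$ corresponds to the r.v. $\{W+t\}$, i.e. to the distribution of $W$ shifted by $t$ on the circle. Thus for every $h\in\Z^+$ one has $\e(e_h(\log_b(\lambda X)))=e_h(t)\,\e(e_h(\log_b X))$, and by Lemma \ref{lem1} the law $P_{{\cal M}_b(\lambda X)}$ is determined by the Fourier coefficients $\widehat{P_W}(h)e_h(t)$ for $h\in\Z$. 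This is the only computation needed and it is routine; the rest is a matter of organizing the three implications.

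First I would prove $(1)\Rightarrow(2)$: if $X$ is Benford then $\widehat{P_W}(h)=0$ for all $h\neq 0$ by Lemma \ref{lem1}, so $\widehat{P_W}(h)e_h(t)=0$ for all $h\neq 0$ and every $\lambda>0$; hence $P_{{\cal M}_b(\lambda X)}=\mu_b=P_{{\cal M}_b(X)}$ for all $\lambda>0$, which is (2). The implication $(2)\Rightarrow(3)$ is immediate, since "for every $\lambda>0$" in particular covers some $\lambda$ which is not a root of $b$. For $(3)\Rightarrow(1)$: suppose $P_{{\cal M}_b(X)}=P_{{\cal M}_b(\lambda X)}$ for some fixed $\lambda>0$ with $\log_b\lambda$ irrational (this is precisely "$\lambda$ different from any root of $b$"). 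Matching Fourier coefficients gives $\widehat{P_W}(h)=e_h(t)\widehat{P_W}(h)$, i.e. $\widehat{P_W}(h)(1-e_h(t))=0$, for every $h\in\Z$. Since $t=\log_b\lambda$ is irrational, $e_h(t)=e^{2i\pi ht}\neq 1$ for every $h\neq 0$, forcing $\widehat{P_W}(h)=0$ for all $h\neq 0$. By Lemma \ref{lem1} this says exactly that $X$ is Benford in base $b$.

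The only place where any care is needed is the role of the hypothesis in (3): one must recognize that "$\lambda$ is not a root of $b$" is equivalent to "$\log_b\lambda\notin\Q$", and that this is exactly what guarantees the orbit $\{ht : h\in\Z^+\}$ avoids $\Z$, so that each factor $1-e_h(t)$ is nonzero and can be cancelled. I do not anticipate a genuine obstacle here — the argument is a clean three-line Fourier computation plus the elementary fact that $e^{2i\pi h t}=1$ iff $ht\in\Z$; the main point worth stating explicitly is that if $\lambda$ \emph{were} a root of $b$, say $\lambda=b^{p/q}$, then any $X$ supported on $\{b^{z/q}:z\in\Z\}$ would satisfy $P_{{\cal M}_b(X)}=P_{{\cal M}_b(\lambda X)}$ without being Benford, which explains why the exclusion in condition (3) cannot be dropped.
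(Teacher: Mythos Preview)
Your proposal is correct and follows essentially the same route as the paper: both reduce to the single Fourier identity $\e(e_h(\log_b(\lambda X)))=e_h(\log_b\lambda)\,\e(e_h(\log_b X))$ via Lemma~\ref{lem1}, then use that $e_h(\log_b\lambda)\neq 1$ for all $h\in\Z^+$ precisely when $\log_b\lambda$ is irrational. Your added remarks (the trivial $(2)\Rightarrow(3)$ step and the counterexample showing the exclusion in (3) is necessary) are correct elaborations but not part of the paper's proof.
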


\begin{proof}
Let $X$ be a positive random variable and $\lambda$ be a positive real number. Then, for every $h\in\Z^+$,  
$$
\e(e_h(\log_b(\lambda X)))=e_h(\log_b\lambda)\e( e_h(\log_b X))\,.
$$ 
So, by Lemma \ref{lem1}, Condition 1 implies Condition 2. Moreover, the above formula and Condition 3 imply
$$
\e(e_h(\log_b X))=e_h(\log_b\lambda)\e( e_h(\log_b X))\,.
$$ 
Since $e_h(\log_b\lambda)\neq 1$ when $h\in\Z^+$ and $\lambda$ is not any root of $b$, this implies Condition 1.
\end{proof}

\subsection{Base-invariance and power-invariance}

We must distinguish the notion of base-invariance considered in \cite{knuth} (called {\it base-invariance} in the sequel) from the one studied in \cite{hill2} (called {\it Hill $b$-base-invariance} in the sequel). The first one is defined by
$$
\forall \,b^{\prime}>1,\; \forall \,b^{\prime\prime}>1,  \;P_{{\cal M}_{b^{\prime}}(X)}=P_{{\cal M}_{b^{\prime\prime}}(X)}\,.
$$
The second one is defined by
$$
\forall \,n\in\n^*, \;P_{{\cal M}_{b^{1/n}}(X)}=P_{{\cal M}_b(X)}
$$
where $b>1$ is fixed.

\subsubsection{Base-invariance}\label{base-inv}

Knuth \cite[Exercice 7 pp. 248, 576]{knuth} has proved with skilly calculations that scale-invariance and base-invariance properties are incompatible. Since the scale-invariance property characterizes the Benford random variables, this implies that the Benford random variables cannot satisfy the base-invariance property. The following proposition is a little bit more precise than the Knuth one and its proof is simple.

\begin{proposition}
If $X$ is base-invariant, then $P_X=\delta_1$ and so $X$ cannot be Benford in any base.
\end{proposition}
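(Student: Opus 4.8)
The plan is to show that base-invariance forces the characteristic behaviour of $\log_{b'} X$ for every base $b'$ to be so rigid that only a degenerate law survives. First I would fix a positive random variable $X$ that is base-invariant and translate the hypothesis into Fourier language. Writing $\log_{b'} X = (\log b / \log b') \log_b X$, we have for every $h \in \Z^+$ and every $b' > 1$
\[
\e\bigl(e_h(\log_{b'} X)\bigr) = \e\bigl(e_h\bigl(\tfrac{\log b}{\log b'}\log_b X\bigr)\bigr) = \varphi\!\left(\tfrac{2\pi h}{\log b'}\right),
\]
where $\varphi(t) = \e(\exp(it \log X))$ is the characteristic function of the real random variable $\log X$. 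Since $P_{{\cal M}_{b'}(X)} = P_{{\cal M}_b(X)}$ for all $b'$, the $h$-th Fourier coefficient of the mantissa law is the same in every base; equivalently, $|\varphi(2\pi h/\log b')|$ (and in fact the coefficient itself, suitably interpreted on the circle) does not depend on $b'$. As $b'$ ranges over $(1,\infty)$, the quantity $2\pi h/\log b'$ ranges over all of $(0,\infty)$, so we conclude that $t \mapsto \varphi(t)$ has constant modulus on $(0,\infty)$.

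Next I would exploit this. A characteristic function with $|\varphi(t)| \equiv c$ on an interval (here a half-line) is extremely restrictive: by continuity and $\varphi(0)=1$ we get $c=1$, so $|\varphi(t)| = 1$ for all $t \ge 0$, hence for all real $t$. A standard fact (a characteristic function of modulus one everywhere corresponds to a point mass) then gives $\log X = \alpha$ a.s.\ for some constant $\alpha$, i.e.\ $P_X = \delta_{b^\alpha}$. It remains to pin down $\alpha$. Returning to the base-invariance identity at the level of mantissae rather than Fourier coefficients: $P_{{\cal M}_{b'}(b^\alpha)} = {\cal M}_{b'}(b^\alpha)$ must be the same point in $[1,b')$... but wait, the interval $[1,b')$ itself changes with $b'$, so the only way a constant $b^\alpha$ can have a mantissa law independent of the base is ${\cal M}_{b'}(b^\alpha) = 1$ for all $b' > 1$, which forces $b^\alpha$ to be simultaneously a power of every base $b'$; the only positive real with that property is $1$. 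Hence $b^\alpha = 1$, i.e.\ $P_X = \delta_1$, and since $\delta_1 \neq \mu_{b''}$ for any $b'' > 1$ (indeed $\mu_{b''}$ is non-atomic), $X$ cannot be Benford in any base.

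The step I expect to be the main obstacle is the careful handling of the circle-versus-line subtlety: the Fourier coefficients live on the torus $\reel/\Z$ and the identification $e_h(\{\log_{b'}x\}) = e_h(\log_{b'}x)$ means the base-invariance hypothesis only directly controls $\varphi$ at the discrete points $2\pi h/\log b'$, $h \in \Z^+$; one must vary $b'$ continuously to fill out the half-line and then invoke continuity of $\varphi$ to upgrade to "constant modulus on a half-line". A cleaner alternative, which I would pursue if the above bookkeeping gets delicate, is to combine the already-proved incompatibility of scale-invariance and base-invariance (Knuth's result, quoted just above) — but since the paper wants a self-contained short proof, the Fourier route sketched here is the natural one, and the degeneracy $|\varphi| \equiv 1 \Rightarrow$ point mass is the crux.
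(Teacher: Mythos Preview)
Your approach is essentially the paper's: both express the base-$b'$ Fourier coefficient as $\varphi(2\pi h/\log b')$ with $\varphi$ the characteristic function of $\log X$, use base-invariance to make this independent of $b'$, and then exploit continuity of $\varphi$ at $0$. The paper is marginally more direct---letting $b''\to\infty$ gives the common value $\varphi(0)=1$ at once, so $P_{\{\log_{b'}X\}}=\delta_0$ for every $b'$ and hence $X=1$ a.s.---whereas you detour through ``$|\varphi|\equiv 1 \Rightarrow$ point mass'' and a separate argument to locate the mass at $1$; note that since the coefficient itself (not just its modulus) is independent of $b'$, you actually get $\varphi\equiv 1$ directly, which already forces $\log X=0$ and makes the last paragraph unnecessary.
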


\begin{proof}
Suppose that $X$ is base-invariant and fix $h\in\Z^+$ and $b^{\prime}>1$. Lemma \ref{lem1} gives 
$$
\forall\,b^{\prime\prime}>1,\;\e(e_h(\log_{b^{\prime}} X))=\e(e_h(\log_{b^{\prime\prime}} X))=\phi(h/\log b^{\prime\prime})
$$
where $\phi$ is the characteristic function of $\log X$. 
Besides, $\phi$ is continuous and satisfies $\phi(0)=1$. Hence, letting $b^{\prime\prime}$ tends to infinity, we get $\e(e_h(\log_{b^{\prime}} X))=1$, which is true for any $h\in\Z^+$. According to the Levy's Theorem on the torus (see Section \ref{ud1}), this implies that $P_{\{\log_{b^{\prime}} X\}}=\delta_0$ and then $P_{{\cal M}_{b^{\prime}}(X)}=\delta_1$. So $P_X$ is supported by the set $\{1, b^{\prime}, (b^{\prime})^2,\dots\}$ and, since $X$ is supposed to be base-invariant, this must be true for every $b^{\prime}>1$. This is impossible unless $X=1$ a.s.. 
\end{proof}

\subsubsection{Hill $b$-base-invariance and power-invariance}

The following proposition has already been proved by Hill \cite{hill2}, by considering the $\sigma$-algebra generated by the mantissa function. However, we give below an original and shorter proof.

\begin{proposition}
A positive absolutely continuous random variable is Hill $b$-base-invariant if and only if it is Benford in base $b$.
\end{proposition}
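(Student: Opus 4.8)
The plan is to establish both implications using Fourier analysis on the torus and Lemma \ref{lem1}, exactly as the earlier propositions in this appendix do. Let $X$ be a positive absolutely continuous random variable, write $\phi$ for the characteristic function of $\log X$, and recall that $\e(e_h(\log_{b^{1/n}}X))=\e(e_h(n\log_b X))=\phi(2\pi hn/\log b)$ for $h,n\in\Z^+$. The key observation is that Hill $b$-base-invariance, $P_{{\cal M}_{b^{1/n}}(X)}=P_{{\cal M}_b(X)}$ for all $n\in\n^*$, is equivalent — via Lévy's theorem on the torus (Section \ref{ud1}) — to the condition $\phi(2\pi hn/\log b)=\phi(2\pi h/\log b)$ for all $h\in\Z^+$ and all $n\in\Z^+$; equivalently, writing $\psi(h)=\phi(2\pi h/\log b)$, the sequence $(\psi(h))_{h\geq 1}$ is constant along every multiplicative dilation, i.e. $\psi(hn)=\psi(h)$ for all $h,n\geq 1$.

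For the easy direction, if $X$ is Benford in base $b$ then by Lemma \ref{lem1} we have $\e(e_h(\log_b X))=0$ for every $h\in\Z^+$, i.e. $\psi(h)=0$ for all $h\geq 1$; hence $\psi(hn)=0=\psi(h)$ trivially and $X$ is Hill $b$-base-invariant. For the converse, I would first take $n=1$ to see nothing, then exploit $\psi(h)=\psi(hn)$ with $h$ fixed and $n\to\infty$: since $X$ is absolutely continuous, $\log X$ has a characteristic function $\phi$ with $\lim_{|t|\to\infty}\phi(t)=0$ by the Riemann–Lebesgue lemma, so $\psi(hn)=\phi(2\pi hn/\log b)\to 0$ as $n\to\infty$. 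Therefore $\psi(h)=\lim_{n\to\infty}\psi(hn)=0$ for every $h\in\Z^+$, which by Lemma \ref{lem1} says precisely that $X$ is Benford in base $b$.

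The main (and essentially only) subtlety is justifying that absolute continuity of $X$ gives $\lim_{|t|\to\infty}\phi(t)=0$: $X$ absolutely continuous means $\log X$ is absolutely continuous (the map $x\mapsto\log x$ is a diffeomorphism of $]0,\infty[$), so $\log X$ has a density in $L^1(\reel)$ and Riemann–Lebesgue applies to its Fourier transform $\phi$. Everything else is bookkeeping: the translation-invariance built into the mantissa (only $\{\log_{b^{1/n}}X\}$ matters, and $e_h(\{x\})=e_h(x)$) lets one pass freely between $P_{{\cal M}_{b^{1/n}}(X)}=P_{{\cal M}_b(X)}$ and the equality of Fourier coefficients, and Lemma \ref{lem1} converts the vanishing of those coefficients into the Benford property. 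No tightness or moment hypotheses are needed, in keeping with the style of the surrounding results.
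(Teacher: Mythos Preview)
Your proof is correct and follows essentially the same approach as the paper: both use the identity $\e(e_h(\log_{b^{1/n}}X))=\e(e_{hn}(\log_b X))$, deduce the easy direction from the vanishing of all Fourier coefficients via Lemma~\ref{lem1}, and for the converse combine the equality $\e(e_h(\log_b X))=\e(e_{hn}(\log_b X))$ with the Riemann--Lebesgue lemma (applied to the density of $\log X$) to force every coefficient to zero. Your explicit remark that absolute continuity of $X$ transfers to $\log X$ via the diffeomorphism $x\mapsto\log x$ is a useful detail that the paper leaves implicit.
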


\begin{proof}
Let $X$ be a positive random variable. Then, for every $h\in\Z^+$ and $n\in\n^*$, 
\[
\e(e_h(\log_{b^{1/n}}X))=\e(e_{hn}(\log_b X))\,.
\]
So, Lemma \ref{lem1} shows that if $X$ is Benford in base $b$, it is also Benford in base $b^{\frac{1}{n}}$ for every $n\in\n^*$. In particular, this implies that $P_{{\cal M}_{b^{1/n}}(X)}=P_{{\cal M}_b(X)}$. Conversely, if we suppose that $X$ is Hill $b$-base-invariant, the above formula gives 
\[
\e(e_h(\log_b X))=\e(e_{hn}(\log_b X))\;\;\;\;(n\geq 1, h\in\Z^+).
\]
Besides, if we assume that $X$ is absolutely continuous, the Riemann-Lebesgue Theorem says that
\[
\lim_n \e(e_{hn}(\log_b X))=0\;\;\;\;(h\in\Z^+).
\] 
Together with Lemma \ref{lem1}, this proves that $X$ is Benford in base $b$.
\end{proof}

Due to Lemma \ref{lem1}, it is easy to verify that $X$ is Benford in base $b$ if and only if the same fact holds for $1/X$. So, since $\log_{b^{1/n}}x=\log_b x^n$ ($x>0$ and $n\in\n^*$), we can rewrite the above proposition as follows.

\begin{proposition}
If a positive random variable $X$ is Benford in base $b$, then, for every $m\in\Z^+$, $X^m$ is also Benford in base $b$. Conversely, if an absolutely continuous positive random variable $X$ satisfies 
$$
\forall \,n\in\n^*, \;P_{{\cal M}_{b}(X^n)}=P_{{\cal M}_b(X)}\,,
$$
then $X$ is Benford in base $b$.
\end{proposition}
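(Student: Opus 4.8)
The plan is to deduce the statement directly from the Fourier characterisation of Lemma \ref{lem1}; the result is in fact a restatement, in the fixed base $b$, of the preceding proposition on Hill $b$-base-invariance, obtained through the identity $\log_{b^{1/n}}x=\log_b x^n$ ($x>0$, $n\in\n^*$).

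For the direct implication, suppose $X$ is Benford in base $b$ and fix $m\in\Z^+$. By Lemma \ref{lem1}, $\e(e_k(\log_b X))=0$ for every $k\in\Z^+$. Hence, for every $h\in\Z^+$,
\[
\e\bigl(e_h(\log_b X^m)\bigr)=\e\bigl(e_h(m\log_b X)\bigr)=\e\bigl(e_{hm}(\log_b X)\bigr)=0,
\]
because $hm\in\Z^+$ and $e_h(mx)=e_{hm}(x)$. Applying Lemma \ref{lem1} once more shows that $X^m$ is Benford in base $b$. (Running the same computation with $\Z\setminus\{0\}$ in place of $\Z^+$, together with the fact that $X$ is Benford in base $b$ if and only if $1/X$ is, would even cover all non-zero integer exponents, although the statement only asks for $m\in\Z^+$.)

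For the converse, assume $X$ is absolutely continuous and $P_{{\cal M}_b(X^n)}=P_{{\cal M}_b(X)}$ for every $n\in\n^*$. Since ${\cal M}_b(Z)=b^{\{\log_b Z\}}$ and $z\mapsto b^z$ maps $[0,1[$ bijectively (with continuous inverse) onto $[1,b[$, this equality of laws is equivalent to $P_{\{\log_b X^n\}}=P_{\{\log_b X\}}$ on the torus $\reel/\Z$; as a probability measure on $\reel/\Z$ is determined by its Fourier coefficients, it is in turn equivalent to $\e(e_h(\log_b X^n))=\e(e_h(\log_b X))$ for all $h$, that is
\[
\e\bigl(e_{hn}(\log_b X)\bigr)=\e\bigl(e_h(\log_b X)\bigr)\qquad(h\in\Z^+,\ n\in\n^*).
\]
Now fix $h\in\Z^+$. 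Since $X$, hence $\log_b X$, is absolutely continuous, the Riemann--Lebesgue Theorem gives $\lim_{n\to\infty}\e(e_{hn}(\log_b X))=0$, whence $\e(e_h(\log_b X))=0$. As $h\in\Z^+$ was arbitrary, Lemma \ref{lem1} yields that $X$ is Benford in base $b$.

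There is essentially no obstacle: the direct part is immediate from Lemma \ref{lem1} and the elementary identity $e_h(mx)=e_{hm}(x)$, and in the converse the only slightly delicate point is the passage from equality of mantissa laws to equality of the Fourier coefficients of the associated laws on $\reel/\Z$, after which one simply repeats the Riemann--Lebesgue argument already used for Hill $b$-base-invariance.
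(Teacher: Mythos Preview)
Your proof is correct and follows essentially the same approach as the paper: the paper presents this proposition as a direct restatement of the preceding Hill $b$-base-invariance result via the identity $\log_{b^{1/n}}x=\log_b x^n$, and your argument reproduces exactly the Fourier computation and the Riemann--Lebesgue step used there. Your explicit justification of the passage from $P_{{\cal M}_b(X^n)}=P_{{\cal M}_b(X)}$ to equality of Fourier coefficients on $\reel/\Z$ is slightly more detailed than the paper's, but the substance is identical.
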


\subsection{Product-invariance}

The following proposition generalizes the scale-invariance property because the constant $\lambda$ appearing in Section \ref{scale} can be viewed as a random variable independent of $X$. Besides, it slightly generalizes Theorem 2.3 in \cite{giuliano2}. Note that the authors of \cite{giuliano2} suppose, in their abstract, that $P_X$ is supported by a finite interval, but they do not use this hypothesis in the proof of their theorem which follows the same lines as ours.

\begin{proposition}\label{pprop}
Let $X$ and $Y$ be two independent positive random variables. If $X$ (or $Y$) is Benford in base $b$, then $XY$ is Benford in base $b$ too.
Conversely, if $P_X$ is not supported by any set $\{b^{\frac{z}{h}} : z \;\mbox{integer}\}$ ($h$ positive integer) and if $P_{{\cal M}_b(Y)}=P_{{\cal M}_b(XY)}$, then $Y$ is Benford in base $b$.
\end{proposition}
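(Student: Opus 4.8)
The plan is to translate everything into Fourier coefficients via Lemma \ref{lem1} and the remarks of Section \ref{ud1}, in the same spirit as the scale-invariance proposition of Section \ref{scale}. First I would record the basic identity: since $X$ and $Y$ are independent and $\log_b(XY)=\log_b X+\log_b Y$, for every $h\in\Z^+$
\[
\e(e_h(\log_b(XY)))=\e(e_h(\log_b X))\,\e(e_h(\log_b Y)).
\]
The forward implication is then immediate: if $X$ is Benford in base $b$, Lemma \ref{lem1} gives $\e(e_h(\log_b X))=0$ for every $h\in\Z^+$, hence $\e(e_h(\log_b(XY)))=0$ for every $h\in\Z^+$, and Lemma \ref{lem1} again shows that $XY$ is Benford in base $b$. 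Since the roles of $X$ and $Y$ are symmetric, the same conclusion holds if $Y$ is Benford.

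For the converse, I would first note that $P_{{\cal M}_b(Y)}=P_{{\cal M}_b(XY)}$ is equivalent to $P_{\{\log_b Y\}}=P_{\{\log_b(XY)\}}$ (Section \ref{ud1}), and, by the uniqueness part of L\'evy's theorem on the torus $\reel/\Z$ invoked there, this is in turn equivalent to $\e(e_h(\log_b Y))=\e(e_h(\log_b(XY)))$ for every $h\in\Z$. Combining this with the displayed identity yields, for every $h\in\Z^+$,
\[
\e(e_h(\log_b Y))\bigl(1-\e(e_h(\log_b X))\bigr)=0.
\]
It then remains to show that $\e(e_h(\log_b X))\neq 1$ for every $h\in\Z^+$. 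But $\e(e_h(\log_b X))=1$ together with $|e_h(\log_b X)|\le 1$ forces $e_h(\log_b X)=1$ a.s., i.e. $h\log_b X\in\Z$ a.s., i.e. $X\in\{b^{z/h}:z\ \mbox{integer}\}$ a.s.; this is excluded by hypothesis. Hence $1-\e(e_h(\log_b X))\neq 0$, so $\e(e_h(\log_b Y))=0$ for every $h\in\Z^+$, and Lemma \ref{lem1} gives that $Y$ is Benford in base $b$.

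I do not expect a serious obstacle here. The only points requiring a little care are the passage from equality of the mantissa laws to equality of all Fourier coefficients of the fractional parts (exactly the torus version of L\'evy's theorem already used in Section \ref{ud1}) and the elementary equivalence $\e(e_h(\log_b X))=1\iff e_h(\log_b X)=1$ a.s.; everything else is a one-line Fourier computation.
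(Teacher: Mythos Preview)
Your proof is correct and follows essentially the same Fourier-coefficient approach as the paper: both use the factorization $\e(e_h(\log_b(XY)))=\e(e_h(\log_b X))\,\e(e_h(\log_b Y))$ together with Lemma~\ref{lem1}, and for the converse both reduce to showing $\e(e_h(\log_b X))\neq 1$ under the support hypothesis. Your write-up is slightly more explicit (spelling out the passage from equality of mantissa laws to equality of Fourier coefficients, and the equivalence $\e(e_h(\log_b X))=1\iff e_h(\log_b X)=1$ a.s.), but the argument is the same.
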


\begin{proof}
Let $h\in\Z^+$ and suppose that $X$ and $Y$ are independent. Then 
$$
\e(e_h(\log_b(XY)))=\e(e_h(\log_b X))\e(e_h(\log_b Y))\,.
$$
If $X$ is Benford in base $b$, Lemma \ref{lem1} implies $\e(e_h(\log_b X))=0$ and this gives the first part of the proposition. Conversely, if $P_X$ is not supported by any set $\{b^{\frac{z}{h}} : z \;\mbox{integer}\}$ ($h$ positive integer), then $\e(e_h(\log_b X))\neq 1$ and so $\e(e_h(\log_b(XY)))$ and $\e(e_h(\log_b Y))$ cannot be equal unless they are equal to zero.
\end{proof}

\subsection{Mixtures}\label{mixture}

When $X$ and $Y$ are independent, the conditional law of $XY$ given $(Y=a)$ is the law of $aX$. So $P_{XY}$ can be viewed as a mixture of the laws $P_{aX}$ ($a>0$). Theorem 2.3 in \cite{giuliano2} states that, if $X$ is continuous, $P_Y$ and the mixture $P_{XY}$ lead to the same mantissa law in base $b$ if and only if $P_{{\cal M}_b(Y)}=\mu_{b}$ (this is the converse part of Proposition~\ref{pprop} above). Hence such a mixture is rarely the Benford's law (see \cite{hill} for a similar and more sophisticated property). 

But, Proposition~\ref{pprop} also shows that, whatever $P_Y$ is, $P_{{\cal M}(XY)}=\mu_b$ when $X$ is Benford in base $b$ and $X$ and $Y$ are independent. In other words, any mixture (satisfying the above procedure) of laws of Benford random variables in base $b$ is the law of a Benford random variable in base $b$. This property can be extended to general mixtures.


\begin{thebibliography}{99}

\bibitem{anderson} Anderson T. C., Rolen L. and Stoehr R. (2011),  Benford's law for coefficients of modular forms and partition functions, Proc. Amer. Math. Soc., 139-5, 1533--1541.

\bibitem{benford} Benford F. (1938), The law of anomalous numbers, P. A. Philos. Soc., 78, 551--572. 

\bibitem{berger} Berger A. , Bunimovich L.A. and Hill T.P. (2005), One-dimensional dynamical systems and Benford's law, Trans. Amer. Math. Soc., 357(1), 197--219.

\bibitem{berger2} Berger A. and Hill T.P. (2011), A basic theory of Benford's Law, Probab. Surv., 8, 1--126.

\bibitem{bill} Billingsley P., Probability and Measure, Wiley, New-York, 1979.

\bibitem{cohen} Cohen  D.I. and Katz T.M. (1984), Prime Numbers and the First Digit Phenomenon, J. Number Theory {\bf 18-3}, 261--268. 

\bibitem{davenport} Davenport H., Erd\H{o}s P. and Le Veque W.J. (1963), On Weyl's criterion for uniform distribution, Michigan Math. J. {\bf 10}, 311--314.

\bibitem{diaconis} Diaconis P. (1977), The Distribution of Leading Digits and Uniform Distribution mod 1, Ann. Probab. {\bf 5-1}, 72--81. 

\bibitem{doukhan} Doukhan P., Probabilistic and Statistical Tools for Modeling Time Series, Publica\c c\~oes Matem\`aticas, Col\'oquio Brasiliero de Matem\`atica, 2015.

\bibitem{dumbgen} D\"umbgen L. and Leuenberger C. (2008), Explicit bounds for the approximation error in Benford's law, Elect. Comm. in Probab. 13, 99--112.

\bibitem{eliahou} Eliahou, S., Mass\'e, B. and Schneider, D. (2013), On the mantissa distribution of powers of natural and prime numbers, Act. Math. Hung., 139(1-2), 49--63.

\bibitem{fuchs} Fuchs  A. and Letta G.  (1996), Le probl\`eme du premier chiffre d\'ecimal pour les nombres premiers, (French) [The first digit problem for primes], Electron. J. Combin. {\bf 3-2}, R25. 

\bibitem{giuliano2} Giuliano R. and Janvresse E. (2010), A unifying probabilistic interpretation of Benford's law, Unif. Distrib. Theory, 2, 169--182.

\bibitem{hamming} Hamming R. (1976), On the distribution of numbers, Bell System Technical Journal, 49, 1609--1625.

\bibitem{hill} Hill T.P. (1995), A statistical derivation of the significant digit law, Statist. Sci., 10(4), pp. 354-363.

\bibitem{hill2} Hill T.P. (1995), Base-invariance implies Benford's law, Proc. Amer. Math. Soc., 123, 887--895. 

\bibitem{holewijn} Holewijn P. (1969), Note on Weyl criterion and the uniform distribution of independent random variables, The Ann. Math. Stat., 40-3, p. 1124-1125.

\bibitem{janvresse} Janvresse E. and de la Rue T. (2004), From uniform distributions to Benford's law, J. Appl. Probab. 41, p. 1203-1210.

\bibitem{knuth} Knuth D. E., The Art of Computer Programming, volume 2, Reading, Massachusetts: Addison-Wesley, 1968. 

\bibitem{kuipers} Kuipers L. and Niederreiter H., Uniform Distribution of Sequences, Dover Publications, New-York, 2006.

\bibitem{leemis} Leemis L.M., Schmeiser B.W. and Evans D.L. (2000), Survival Distributions Satisfying
Benford's Law, Amer. Statist. 54, 1-6.

\bibitem{masse2} Massé B. and Schneider D. (2012), Random number sequences and the first digit phenomenon, Electron. J. Probab., 17, 1--17.

\bibitem{masse4} Massé B. and Schneider D. (2015), Fast growing sequences of numbers and the first digit phenomenon, Int. J. Number Theory, 11-3, 705--719.

\bibitem{miller} Miller S. J. and Nigrini M. J. (2008), The modulo one Central Limit Theorem and Benford's law for products, Internat. J. Algebra, 2-3, 119--130.

\bibitem{nigrini0} Nigrini, M. J., and Miller S. J. (2007), Benford's law applied to hydrology
data - Results and relevance to other geophysical data, Math. Geol., 39-5, 469--490.

\bibitem{nigrini} Nigrini M. J. and Mittermaier L. J. (1997), The use of Benford's law as an aid in analytical procedures, Auditing-J. Pract. Th., 16(2), 52--57.

\bibitem{sambridge} Sambridge M., Tkal\u ci\'c H. and Jackson A. (2010), Benford's law in the natural sciences, Geophys. Res. Lett., 37, L22301.

\bibitem{sharpe} Sharpe M. J. (2006), Limit laws and mantissa distributions, Probab. Math. Statist., 26-1, 175--185.

\bibitem{robbins} Robbins H. (1953), On the equidistribution of sums of independent random variables, Proc. Amer Math. Soc., 4, p. 786--799.

\bibitem{xu} Xu B., Wang J., Liu G. and Dai Y. (2011), Photorealistic computer graphics forensics based on leading digit law, J. Electron., Volume 28, Issue 1, pp 95--100 


\end{thebibliography}
\end{document}